\newtheorem{theorem}{Theorem}
\newtheorem{lemma}{Lemma}
\newtheorem{corollary}{Corollary}
\newtheorem{proposition}{Proposition}
\begin{document}

\title[Diederich--Forn\ae ss index and global regularity]{Diederich--Forn\ae ss index and global regularity in the $\overline{\partial}$--Neumann problem: domains with comparable Levi eigenvalues}

\author{Bingyuan Liu}
\author{Emil J.~Straube}

\address{School of Mathematical and Statistical Sciences, University of Texas Rio Grande Valley Edinburg, Texas, 78539}
\email{bingyuan.liu@utrgv.edu}

\address{Department of Mathematics, Texas A\&M University College Station, Texas, 77843}
\email{e-straube@tamu.edu}

\thanks{2000 \emph{Mathematics Subject Classification}: 32W05, 32T99}
\keywords{Diederich--Forn\ae ss index, Bergman projection, $\overline{\partial}$--Neumann operator, Sobolev estimates, global regularity, comparable Levi eigenvalues, maximal estimates}

\date{July 29, 2022, revision March 19, 2025}

\begin{abstract}
Let $\Omega$ be a smooth bounded pseudoconvex domain in $\mathbb{C}^{n}$. Let $1\leq q_{0}\leq (n-1)$. We show that if $q_{0}$--sums of eigenvalues of the Levi form are comparable, then if the Diederich--Forn\ae ss index of $\Omega$ is $1$, the $\overline{\partial}$--Neumann operators $N_{q}$ and the Bergman projections $P_{q-1}$ are regular in Sobolev norms for $q_{0}\leq q\leq n$. In particular, for domains in $\mathbb{C}^{2}$, Diederich--Forn\ae ss index $1$ implies global regularity in the $\overline{\partial}$--Neumann problem.
\end{abstract}

\maketitle

\section{Introduction}\label{intro}
Let $\Omega$ be a bounded pseudoconvex domain in $\mathbb{C}^{n}$ with ($C^{\infty}$) smooth boundary. Then there exists $\eta\in (0,1)$ and a defining function $\rho_{\eta}$ such that $-(-\rho_{\eta})^{\eta}$ is plurisubharmonic on $\Omega$ near $b\Omega$ (\cite{DF77b},\cite{Range81}). The supremum over all such $\eta$ has come to be known as the Diederich--Forn\ae ss index of $\Omega$. The relevance of the index stems from the fact that in general, $\Omega$ does not admit a defining function that is plurisubharmonic on $\Omega$ near the boundary, not even locally (\cite{Fornaess79}, \cite{Behrens85}, \cite{GallagherHarz22}).

The index is known to be strictly less than one on the Diederich--Forn\ae ss worm domains (\cite{DF77b}; see \cite{Liu19} for the exact value; see also \cite{AbdulHarr19}). It is known to be equal to one in the following cases: $\Omega$ admits a defining function that is plurisubharmonic \emph{at} (not necessarily near) the boundary (\cite{FornaessHerbig08}), there are `good vector fields' and the set of infinite type points is `well behaved' (\cite{Harrington19}), $b\Omega$ satisfies Property(P) (\cite{Harrington19}), $\Omega$ is strictly pseudoconvex except for a simply connected complex manifold in the boundary (\cite{Liu19b}). There are two things to note about this list. First, index one does not imply that there is a defining function that is plurisubharmonic on $\Omega$ (near $b\Omega$). Indeed, domains with real analytic boundaries are of finite type, so satisfy property(P), yet need not admit even local plurisubharmonic defining functions (\cite{Behrens85, GallagherHarz22}). Second, and perhaps more strikingly, all domains in the list with index one are known to have globally regular Bergman projections and $\overline{\partial}$--Neumann operators (\cite{BoasStraube91a, BoasStraube93, Catlin82, Straube10a, Harrington19}), while on the worm domains, these operators are regular only up to a finite Sobolev level that is closely related to their index (see \cite{BoasStraube89, Barrett92, Christ96, Liu19}). 

\smallskip

The question what the implications of the Diederich--Forn\ae ss index are for global regularity arose in earnest after the results on the worm domains in \cite{BoasStraube89, Barrett92, Christ96} and after \cite{BoasStraube91a}, where the authors showed that if $\Omega$ admits a defining function that is plurisubharmonic at the boundary, then the Bergman projections and $\overline{\partial}$--Neumann operators (at all form levels) on $\Omega$ are globally exactly regular, i.e., are continuous in Sobolev--$s$ norms for $s\geq 0$. A quantitative study of this question was initiated in \cite{Kohn99} where it was shown that if the index is one, and there is some control on the defining functions $\rho_{\eta}$ as $\eta\rightarrow 1^{-}$, then global regularity holds. More precisely, the condition is that $\liminf_{(\eta\rightarrow 1^{-})}((1-\eta)^{1/3}\max_{b\Omega}|\nabla h_{\eta}|) = 0$, where $h_{\eta}$ is given by $\rho_{\eta}=e^{h_{\eta}}\rho$ for some fixed defining function $\rho$. The exponent $1/3$ was improved to $1/2$ in \cite{Harrington11}. In \cite{BernChar00}, it is shown that if the index is $\eta_{0}\leq 1$, then regularity holds for $0\leq s<\eta_{0}/2\;(\leq 1/2)$. In \cite{PinZamp14}, the authors, among other things, generalize the ideas from \cite{Kohn99} to $q$--convex domains, with a corresponding notion of the index. More recently, the first author showed in \cite{Liu22} that index one implies global regularity for domains with comparable eigenvalues of the Levi form without assuming any control on the defining functions $\rho_{\eta}$, but instead making a technical assumption that controls the geometry of the set of infinite type boundary points.

\smallskip

In this paper, we show, via a different proof, that this assumption is not needed. We also consider the assumption at the level of $q$--forms for $q>1$. In this case, it says that sums of $q$ eigenvalues of the Levi form should be comparable; see section \ref{pre} for a precise definition and discussion.
\begin{theorem}\label{main}
 Let $\Omega$ be a smooth bounded pseudoconvex domain in $\mathbb{C}^{n}$, $1\leq q_{0}\leq (n-1)$. Assume that $q_{0}$--sums of the eigenvalues of the Levi form are comparable. Then, if the Diederich-Forn\ae ss index of $\Omega$ is $1$, the Bergman projections $P_{q-1}$ and the $\overline{\partial}$--Neumann operators $N_{q}$, $q_{0}\leq q\leq n$, are continuous in Sobolev--s norms for $s\geq 0$. 
 \end{theorem}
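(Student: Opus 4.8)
The plan is to reduce global regularity in Sobolev norms to the existence, for every $\varepsilon>0$, of a vector field $X_{\varepsilon}$ of type $(1,0)$ defined near $b\Omega$ whose commutator with $\dbar$ along the boundary is controlled by $\varepsilon$ in a suitable sense — the ``weakly comparable'' or ``good vector field'' mechanism of Boas--Straube. Concretely, I would first recall (from Section~\ref{pre}, presumably) the standard fact that global regularity of $N_{q}$ and of $P_{q-1}$ in all Sobolev norms follows if, for each $\varepsilon>0$, there is a $1$-form $\alpha_{\varepsilon}$ (equivalently a vector field $X_{\varepsilon}$) transverse to the complex tangent space on $b\Omega$ such that the tangential derivative of $\arg$ of $X_{\varepsilon}$ in complex tangential directions is $O(\varepsilon)$ relative to the Levi form — this is the condition that produces, via a twisted Morrey--Kohn--Hörmander inequality with weights $e^{-t\varphi}$, the compactness-free a priori estimates that bootstrap to all of Sobolev space. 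Under the hypothesis that $q_0$-sums of Levi eigenvalues are comparable, ``controlled relative to the Levi form'' becomes ``controlled relative to \emph{any} single eigenvalue sum,'' which is what makes the construction feasible.

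Second, I would construct $X_{\varepsilon}$ directly from the Diederich--Forn\ae ss defining functions. Fix $\eta$ close to $1$ so that $\rho_{\eta}=-(-\rho_{\eta})$ has the property that $-(-\rho_{\eta})^{\eta}$ is plurisubharmonic near $b\Omega$; write $\rho_{\eta}=e^{h_{\eta}}\rho$. The plurisubharmonicity of $-(-\rho_{\eta})^{\eta}$ unwinds to a Hessian inequality for $h_{\eta}$ that says, roughly, that the complex Hessian of $h_{\eta}$ dominates $(1-\eta)\,\partial h_{\eta}\otimes\overline{\partial h_{\eta}}$ modulo the Levi form. The key new idea — avoiding the hypothesis on the geometry of the infinite-type set used in \cite{Liu22} — is that one does \emph{not} need a gradient bound on $h_{\eta}$; instead one uses $\partial h_{\eta}$ itself, restricted to the complex tangent space and evaluated on the boundary, as the ``defect'' $1$-form, and one checks that on complex tangential vectors this defect is controlled by $\sqrt{1-\eta}$ times the square root of the Levi form (by Cauchy--Schwarz applied to the Hessian inequality). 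Taking $\eta=\eta(\varepsilon)\to 1$ makes the defect $O(\varepsilon)$ when measured against the Levi form, and comparability of the $q_0$-eigenvalue sums converts this into the estimate needed at \emph{every} form level $q\geq q_0$ simultaneously.

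Third, I would feed $X_{\varepsilon}$ into the weighted a priori estimate: with the vector field in hand, one runs the standard argument producing, for each $s$, a constant $C_s$ and a compactly-supported (in $\Omega$) error so that $\|u\|_s \lesssim \|\dbar u\|_s + \|\dbarMstar u\|_s + \|u\|_{s-1}$ for $u$ in the domain of $\dbarMstar$ at level $q$; combined with elliptic regularity in the interior this gives $N_q$ bounded on $W^s$, and then $P_{q-1}=I-\dbarMstar N_q\dbar$ (or the appropriate formula) inherits the bound. The passage from the $1$-form defect estimate to the actual Sobolev estimate is where the comparable-eigenvalue hypothesis does its real work: it guarantees that the error terms generated by commuting $X_{\varepsilon}$ past $\dbar$ on a $q$-form are absorbable into $\varepsilon$ times the form's own weighted norm, for all $q\ge q_0$ at once.

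The main obstacle I expect is exactly this last absorption step at the level of $q$-forms when $q_0>1$: for $(0,q)$-forms the relevant quadratic form is a sum of $q$ eigenvalues, and controlling the commutator term requires that the defect $1$-form pair well not against individual Levi eigenvalues but against their $q_0$-sums — this is precisely why the hypothesis is phrased in terms of $q_0$-sums and why regularity propagates upward to all $q\ge q_0$ (more positive eigenvalues only help) but not downward. A secondary technical point is ensuring the construction of $X_{\varepsilon}$ is uniform near the \emph{entire} boundary, including across the (possibly complicated) set of infinite-type points: here the trick is that $\partial h_\eta$ is globally defined and smooth up to $b\Omega$ for each fixed $\eta<1$, so no patching over the infinite-type set — and hence no assumption on its geometry — is required; one only needs to let $\eta\to 1$ at the end.
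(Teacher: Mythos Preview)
Your plan has a genuine gap at the central step. You claim that the plurisubharmonicity of $-(-\rho_\eta)^\eta$ yields, after Cauchy--Schwarz, a \emph{pointwise} bound of the form $|\partial h_\eta(L)|\lesssim\sqrt{1-\eta}\,\sqrt{\text{Levi}(L,\overline L)}$ on complex tangential vectors. This is false. At a weakly pseudoconvex boundary point $p$ and $L\in\mathcal N_p$ (the Levi null space), the right-hand side vanishes, yet $\alpha_\eta(L)=\alpha^\rho(L)+\partial h_\eta(L)$ has no reason to vanish --- indeed, if it did for every $\eta$ close to $1$, one would be in the much easier situation of \cite{BoasStraube93}. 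What the Diederich--Forn\ae ss hypothesis actually gives (via the Liu--Yum characterization \eqref{DFalpha}) is \eqref{hessian}: on Levi-null directions, $|\alpha_\eta(L)|^2$ is controlled by $(1-\eta)$ times the complex Hessian of $-h_\eta$ (plus $|L|^2$), \emph{not} by the Levi form. Off the null space one can add a Levi term, but only with a constant $M_\eta$ that depends on $\eta$ and is not uniform as $\eta\to 1$; see \eqref{esta}. Since there is no pointwise bound on the Hessian of $h_\eta$, your pointwise mechanism cannot close.

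The paper's route is precisely to avoid pointwise control. The Hessian of $-h_\eta$ is exactly what appears on the left of a weighted Kohn--Morrey--H\"ormander identity with weight $e^{h_\eta}$; after a careful substitution $w=e^{-\widetilde{h_\eta}/2}u$ (with $\widetilde{h_\eta}$ a modification killing the normal component of $\overline\partial h_\eta$ on $b\Omega$), one obtains the \emph{integrated} estimate of Proposition~\ref{integrated}:
\[
\sideset{}{'}\sum_J\int_\Omega|\alpha_\eta(L_u^J)|^2\;\lesssim\;(1-\eta)\big(\|\overline\partial u\|^2+\|\overline\partial^{*}u\|^2\big)+C_\eta\|u\|_{-1}^2.
\]
This $L^2$-level control is then fed into explicit commutator formulas (Lemmas~\ref{dbar}, \ref{dbar*}) inside a weighted Sobolev induction with elliptic regularization, rather than into a ``good vector field'' black box. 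The comparable-$q_0$-sums hypothesis enters not, as you suggest, to trade one eigenvalue sum for another in a Levi-form bound, but through the \emph{maximal estimates} \eqref{max}: these are what allow one to dominate $\|\overline\partial(v_K\overline\omega_j)\|^2+\|\overline\partial^{*}(v_K\overline\omega_j)\|^2$ by $\|\overline\partial v\|^2+\|\overline\partial^{*}v\|^2$ for the full $(0,q)$-form $v$ (see \eqref{est10} and \eqref{est44}), which is indispensable both in proving Proposition~\ref{integrated} and in the final absorption. Your outline does not provide a substitute for either of these two ingredients.
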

When $q=(n-1)$, there is only one $q$--sum, and the comparability condition is trivially satisfied. 
 \begin{corollary}\label{n-1}
  Let $\Omega$ be a smooth bounded pseudoconvex domain in $\mathbb{C}^{n}$. If the Diederich-Forn\ae ss index of $\Omega$ is $1$, the Bergman projection $P_{n-2}$ and the $\overline{\partial}$--Neumann operators $N_{n-1}$ are continuous in Sobolev--s norms for $s\geq 0$.
 \end{corollary}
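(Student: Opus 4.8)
The plan is to derive Corollary \ref{n-1} as the special case $q_0 = n-1$ of Theorem \ref{main}. The key observation is that there is exactly one way to choose $q_0 = n-1$ eigenvalues of the Levi form out of the $n-1$ eigenvalues available on a complex tangent hyperplane: the sum of $(n-1)$ eigenvalues is just the \emph{trace} of the Levi form. Hence the ``$(n-1)$-sums of eigenvalues are comparable'' hypothesis is vacuous — a single quantity is always comparable to itself, with comparability constant $1$ — so the hypothesis of Theorem \ref{main} is automatically satisfied with $q_0 = n-1$. I would state this explicitly, referencing the precise definition of comparability from section \ref{pre}, to confirm that the degenerate case really does collapse to a tautology rather than to an empty or ill-posed condition.

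With the hypothesis verified, I would invoke Theorem \ref{main} directly: its conclusion gives continuity in Sobolev-$s$ norms for $s \geq 0$ of $P_{q-1}$ and $N_q$ for all $q$ with $q_0 \leq q \leq n$. Substituting $q_0 = n-1$, the admissible range of $q$ is $n-1 \leq q \leq n$, i.e. $q \in \{n-1, n\}$. For $q = n-1$ this yields global regularity of $N_{n-1}$ and of $P_{n-2}$, which is exactly the assertion of the corollary. (The case $q = n$ additionally gives regularity of $N_n$ and $P_{n-1}$, a free bonus not needed for the statement; I would mention it only in passing, or omit it.) No control on the defining functions $\rho_\eta$ is needed beyond the index being $1$, since that is already built into Theorem \ref{main}.

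There is essentially no obstacle here: the corollary is a pure specialization, and the only ``work'' is the bookkeeping remark that the comparability condition trivializes when $q_0 = n-1$ — which the excerpt in fact already flags in the sentence immediately preceding the corollary (``When $q=(n-1)$, there is only one $q$--sum, and the comparability condition is trivially satisfied''). So the proof is a single line. If I wanted to be slightly more self-contained I would recall why ``one $q$-sum'' means ``the trace on the maximal complex subspace of the tangent space,'' but strictly that belongs to section \ref{pre} and can simply be cited. The main conceptual point to communicate to the reader is that Corollary \ref{n-1} is genuinely unconditional: for $q = n-1$ forms, Diederich--Forn\ae ss index $1$ alone — with no geometric side condition — forces global regularity of the $\overline{\partial}$--Neumann operator and the associated Bergman projection, and this is because at the top form level $q = n-1$ the comparability hypothesis of Theorem \ref{main} costs nothing.
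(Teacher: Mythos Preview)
Your proposal is correct and matches the paper's approach exactly: the paper does not give a separate proof but simply remarks, immediately before stating the corollary, that when $q=n-1$ there is only one $q$--sum and the comparability condition is trivially satisfied, so Corollary~\ref{n-1} is a direct specialization of Theorem~\ref{main} with $q_{0}=n-1$.
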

 For emphasis, we single out the case $n=2$, obtaining regularity for $P_{0}$ and $N_{1}$, the cases usually considered the most important.
 \begin{corollary}\label{C2}
  Let $\Omega$ be a smooth bounded pseudoconvex domain in $\mathbb{C}^{2}$. If the Diederich-Forn\ae ss index of $\Omega$ is $1$, the Bergman projection $P_{0}$ and the $\overline{\partial}$--Neumann operator $N_{1}$ are continuous in Sobolev--s norms for $s\geq 0$. 
 \end{corollary}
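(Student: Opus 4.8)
The plan is to establish the standard sufficient condition for global regularity of $N_q$ and $P_{q-1}$: the existence, for every $\epsilon>0$, of a vector field $X_\epsilon$ of type $(1,0)$, defined near $b\Omega$, that is transverse to $b\Omega$ and whose commutator with $\dbar$ (more precisely, the $(0,1)$--form $\dbar \rho (X_\epsilon)$, or equivalently the $1$--form measuring how far $X_\epsilon$ is from being tangential, paired against the weakly defined complex tangential directions) has a component along the "bad" complex tangential directions that is bounded by $\epsilon$ on $b\Omega$ in the relevant sense. This is exactly the Boas--Straube vector field method (\cite{BoasStraube91a, BoasStraube93, Straube10a}), which reduces global regularity to producing such approximately commuting vector fields. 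I would first recall this reduction precisely at the form level $q$, noting that the comparability of $q_0$--sums of Levi eigenvalues is what allows the maximal estimates of \cite{Derridj} to be invoked on $(0,q)$--forms for $q\geq q_0$, so that it suffices to control the commutator only in the directions spanning the kernel of the Levi form (the "null space" directions), rather than all complex tangential directions.

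The core of the argument is the construction of the family $X_\epsilon$ from the Diederich--Forn\ae ss exponents. Since the index is $1$, for each $\eta<1$ close to $1$ there is a defining function $\rho_\eta = -(-\rho)^\eta e^{g}$-type expression (write $\rho_\eta$ with $-(-\rho_\eta)^\eta$ plurisubharmonic near $b\Omega$) whose complex Hessian, restricted to $b\Omega$, dominates $(1-\eta)$ times the square of the $(1,0)$--form $\alpha_\eta := \partial \rho_\eta / |\partial\rho_\eta|$ along tangential directions, modulo the Levi form. The key point — and the place where the earlier technical hypothesis in \cite{Liu22} was needed — is to extract from this a closed (or approximately closed) real $1$--form $a_\eta$ on $b\Omega$, with $\|a_\eta\|_\infty$ controlled, such that $da_\eta$ controls the Levi-null directions; one then takes $X_\epsilon$ to be the $(1,0)$ field dual to $a_\eta$ plus a transverse correction. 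Concretely, I would set $a_\eta = \operatorname{Im}(\partial \log(-\rho_\eta))$ restricted to $b\Omega$ (the "$1$--form of the Diederich--Forn\ae ss function"), show that on the null space of the Levi form its exterior derivative is comparable to $(1-\eta)$ times a positive multiple of the null directions, and then \emph{renormalize}: replace $a_\eta$ by $a_\eta/(1-\eta)$ or an appropriate truncation so that the exterior derivative is bounded below by $1$ on the null cone while the sup norm is still finite (this is where a cutoff/patching near the infinite-type set is needed, but now carried out without the geometric assumption). Choosing $\eta\to 1^-$ and scaling appropriately yields, for each $\epsilon$, a $1$--form whose "error" in the non-null complex tangential directions is $O(\epsilon)$ — these directions are harmless by the maximal estimate — giving the desired $X_\epsilon$.

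The main obstacle I anticipate is precisely the removal of the control-of-geometry hypothesis: one must patch the locally defined, appropriately scaled $1$--forms $a_\eta$ across a neighborhood of the (possibly complicated) set $K$ of infinite-type boundary points, keeping the sup norm bounded independently of the scaling while preserving the lower bound on $da_\eta$ over the Levi-null cone. The trick should be to exploit that on the set of \emph{finite}-type points the maximal estimate together with subelliptic/compactness estimates already give regularity (no vector field needed there), so one only needs the vector field to do work microlocally near $K$; a partition of unity subordinate to a shrinking tubular neighborhood of $K$, combined with the fact that $|\nabla a_\eta|$ enters the final estimate only through a term that the maximal estimate absorbs, should let one glue without the sup norm blowing up. A secondary technical point is that the commutator errors $[X_\epsilon,\dbar]$ and $[X_\epsilon,\dbar^*]$ at the form level $q$ produce terms involving $\overline{X_\epsilon}$-derivatives of the coefficients; these are handled as in \cite{Straube10a} by the self-adjointness of the weighted problem and by absorbing into the (weighted) energy form, using that $q\geq q_0$ so the basic estimate with the maximal term is available. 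Once $X_\epsilon$ is constructed, the passage to Sobolev-$s$ estimates for all $s\geq 0$ is the standard elliptic-regularization plus induction-on-$s$ argument of Boas--Straube, and it simultaneously yields regularity of $P_{q-1}$ via the identity $P_{q-1} = I - \dbar^* N_q \dbar$ on the appropriate form level.
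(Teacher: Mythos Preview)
The corollary itself is immediate from Theorem \ref{main}: when $n=2$ and $q_{0}=1$ there is a single Levi eigenvalue, so the comparability condition is vacuous. Your proposal is therefore really a strategy for the underlying theorem, and that strategy has a genuine gap.

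You aim to verify the Boas--Straube \emph{pointwise} vector-field condition: for each $\epsilon>0$, produce a transverse $(1,0)$-field $X_{\epsilon}$ whose commutator coefficient $\alpha_{\eta}$ is uniformly small (in sup norm) on the Levi-null directions. But the Diederich--Forn\ae ss hypothesis only yields the pointwise inequality \eqref{hessian}, namely $|\alpha_{\eta}(L)|^{2}\lesssim (1-\eta)\bigl(\text{Hessian of }(-h_{\eta})+|L|^{2}\bigr)$ on $\mathcal{N}_{p}$, with no control whatsoever on $\sup|\nabla h_{\eta}|$ as $\eta\to 1^{-}$. Your ``renormalize by $1/(1-\eta)$ and patch'' step is exactly where this bites: the sup norm of $a_{\eta}/(1-\eta)$ will in general blow up, and the cutoff functions in your partition-of-unity argument have gradients that multiply $u$ (not its derivatives), so the maximal estimate does not absorb them. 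This is precisely the obstacle that forced the extra geometric hypothesis on the infinite-type set in \cite{Liu22}; you have not supplied a mechanism to remove it, only asserted that one ``should'' exist.

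The paper's route is genuinely different and avoids this problem. It abandons pointwise control of $\alpha_{\eta}$ altogether and proves instead the \emph{integrated} estimate of Proposition \ref{integrated}:
\[
\sideset{}{'}\sum_{J}\int_{\Omega}|\alpha_{\eta}(L_{u}^{J})|^{2}\;\leq\;C(1-\eta)\bigl(\|\overline{\partial}u\|^{2}+\|\overline{\partial}^{*}u\|^{2}\bigr)+C_{\eta}\|u\|_{-1}^{2}.
\]
The point is that the Hessian term $\int\sum_{j,k}(\partial^{2}(-h_{\eta})/\partial z_{j}\partial\overline{z_{k}})u_{jJ}\overline{u_{kJ}}$ arising from \eqref{hessian} can be bounded via a weighted Kohn--Morrey--H\"{o}rmander identity (with weight $e^{h_{\eta}}$ and a modified $\widetilde{h_{\eta}}$), without ever estimating $|\nabla h_{\eta}|$ pointwise. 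The maximal estimate enters only to close a bootstrap on $\|\overline{\partial}\widetilde{h_{\eta}}\wedge u\|^{2}$. This $L^{2}$-level control is then fed directly into the commutator formulas of Lemmas \ref{dbar}--\ref{dbar*} inside the usual elliptic-regularization/induction-on-$k$ scheme; the factor $(1-\eta)$ in front allows absorption. So the paper never constructs a ``good vector field'' in your sense --- it shows that a much weaker, purely integral, condition on $\alpha_{\eta}$ suffices.
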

 \emph{Remark:} Two smooth bounded pseudoconvex domains in $\mathbb{C}^{n}$ which are biholomorphic via a biholomorphism that extends to a diffeomorphism of the closures have the same Diederich--Forn\ae ss index. An immediate consequence of Corollary \ref{C2} is therefore that if two smooth bounded pseudoconvex domains in $\mathbb{C}^{2}$ are biholomorphic, then if one has index $1$, so does the other (the biholomorphism extends, by \cite{Bell81}).

 \smallskip

The main ingredients in our proof of Theorem \ref{main} are first recent work in \cite{Liu19}, as reformulated in \cite{Yum21}, where it is shown how to express the Diederich--Forn\ae ss index in terms of estimates on D'Angelo forms $\alpha_{\eta}:=\alpha^{\rho_{\eta}}$ associated with a defining function $\rho_{\eta}$ of the domain (see section \ref{pre}). The resulting estimates on $\alpha_{\eta}$ then involve $\sum_{j,k}(\partial^{2}h_{\eta}/\partial z_{j}\partial\overline{z_{k}})u_{j}\overline{u_{k}}$, where $h_{\eta}$ is as above (see \eqref{hessian} below). The second important point is the observation that one does not need pointwise estimates on $|\alpha_{\eta}|$, much weaker $L^{2}$--type estimates suffice (compare \cite{Straube05}). So instead of controlling the Hessian of $h_{\eta}$ pointwise, which seems hopeless, one only has to deal with  $\int_{\Omega}\sum_{j,k}(\partial^{2}h_{\eta}/\partial z_{j}\partial\overline{z_{k}})u_{j}\overline{u_{k}}$. This latter expression is familiar in the $L^{2}$ theory of the $\overline{\partial}$--Neumann problem, and one can tweak known machinery to obtain a strong estimate on $\int_{\Omega}|\alpha_{\eta}(L_{u})|^{2}$ (Proposition \ref{integrated} below).  From the point of view of exploiting the Diederich--Forn\ae ss index, this is the central estimate. Once this estimate is in hand, the proof of Theorem \ref{main} initially follows \cite{Straube05}, but then uses the formulas for the commutators of $\overline{\partial}$ and $\overline{\partial}^{*}$ with the `usual' transversal vector fields found in \cite{HarringtonLiu20} (Lemmas \ref{dbar} and \ref{dbar*} below).

\smallskip

The rest of the paper is organized as follows. Section \ref{pre} contains definitions and notation, as well as some preliminary lemmas. Section \ref{index-form} contains Proposition \ref{integrated} and its proof. The proof of Theorem \ref{main} is given in section \ref{proof}.

 \section{Notation and Preliminaries}\label{pre}
 
 Let $\Omega$ be a bounded domain in $\mathbb{C}^{n}$ with ($C^{\infty}$) smooth boundary. We denote by $P_{q}$ the Bergman projection on $(0,q)$--forms, $0\leq q\leq n$, and by $N_{q}$ the $\overline{\partial}$--Neumann operator on $(0,q)$--forms, $1\leq q\leq n$. We refer the reader to \cite{ChenShaw01} and \cite{Straube10a} for background on these topics, as well as for standard notation.
 
 \smallskip
 
 We use the following notation for weighted Sobolev norms: for a real valued function $g$ so that $e^{-g} \in C^{\infty}(\overline{\Omega})$, we set $\|u\|_{k,g} = \left(\sum_{s=0}^{k}\int_{\Omega}|\nabla^{s}u|^{2}e^{-g}\right)^{1/2}$, where $\nabla^{s}$ denotes the vector of all derivatives of order $s$. If $u$ is a form, the derivatives act on coefficients as usual (in Euclidean coordinates, unless stated otherwise).
 
 \smallskip
 
  It will be important that certain constants do not depend on $\eta$. We adopt the convention to denote such constants with $C$ and we allow the actual value to change from one occurrence to the next. On the other hand, constants that do depend on $\eta$ will be denoted by $C_{\eta}$, with the same convention about the actual value. We will similarly use subscripts to denote dependence on the form level, or on both form level and $\eta$. When it becomes cumbersome to write $C(\cdots)$, we will use the symbol $\lesssim$ instead.
 
\smallskip
 
 It is by now a standard fact that derivatives of type $(0,1)$ and complex tangential derivatives of either type are benign for the $\overline{\partial}$--Neumann problem and the Bergman projection (\cite{BoasStraube91a}, \cite{Straube10a}, Lemma 5.6). The proof of Theorem \ref{main} requires a version for weighted norms, which we formulate here for the reader's convenience; it follows readily from the unweighted version.

\begin{lemma}\label{benign}
 Let $\Omega$ be a smooth bounded pseudoconvex domain in $\mathbb{C}^{n}$, $k\in \mathbb{N}$, and $Y$ a vector field of type $(1,0)$ with coefficients in $C^{\infty}(\overline{\Omega})$ that is tangential on the boundary. There exists a constant $C$ such that when the weight $e^{-h/2}\in C^{\infty}(\overline{\Omega})$, there exists a constant $C_{h}$ so that for $u\in C^{\infty}_{(0,q)}(\overline{\Omega})\cap dom(\overline{\partial}^{*})$ we have the estimates
 \begin{equation}\label{benign1}
  \sum_{j,J}\left\|\frac{\partial u_{J}}{\partial\overline{z_{j}}}\right\|_{k-1,h}^{2} \leq C\left(\|\overline{\partial}u\|_{k-1,h}^{2}+\|\overline{\partial}^{*}u\|_{k-1,h}^{2}\right) + C_{h}\|u\|_{k-1,h}^{2}\;,
 \end{equation}
and
\begin{equation}\label{benign2}
 \|Yu\|_{k-1,h}^{2} \leq C\left(\|\overline{\partial}u\|_{k-1,h}^{2}+\|\overline{\partial}^{*}u\|_{k-1,h}^{2}\right) + C_{h}\|u\|_{k-1,h}\|u\|_{k,h} \;.
\end{equation}
\end{lemma}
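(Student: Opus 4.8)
The plan is to reduce the weighted estimates to the known unweighted versions by absorbing the weight into the forms. For \eqref{benign1}: the unweighted statement (\cite{Straube10a}, Lemma 5.6) gives, for $v\in C^\infty_{(0,q)}(\overline\Omega)\cap \mathrm{dom}(\dbar^*)$,
\begin{equation*}
 \sum_{j,J}\left\|\frac{\partial v_{J}}{\partial\overline{z_{j}}}\right\|_{k-1}^{2} \leq C\left(\|\dbar v\|_{k-1}^{2}+\|\dbar^{*} v\|_{k-1}^{2}\right) + C\|v\|_{k-1}^{2}\;.
\end{equation*}
First I would note that the weighted norm $\|\cdot\|_{k-1,h}$ is equivalent, with constants depending on $h$, to the unweighted norm $\|\cdot\|_{k-1}$, since $e^{-h}$ and its derivatives up to order $k-1$ are bounded above and below on $\overline\Omega$. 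Applying the unweighted inequality to $v=u$ and then re-inserting the weight on both sides costs only a factor $C_h$, which is harmless for \eqref{benign1} because the left side, the first two terms on the right, and the last term on the right all carry the \emph{same} Sobolev order $k-1$; so the constant in front of the $\dbar$/$\dbar^*$ terms may be taken to be a $C_h$, and \eqref{benign1} follows. (One should remark that the domination of tangential $(0,1)$-derivatives is not lost under multiplication by the smooth positive weight because $e^{-h/2}u\in\mathrm{dom}(\dbar^*)$ as well — multiplication by a function in $C^\infty(\overline\Omega)$ preserves the $\dbar^*$-boundary condition.)

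The subtler point is \eqref{benign2}, where we must keep the constant $C$ in front of $\|\dbar u\|_{k-1,h}^2+\|\dbar^* u\|_{k-1,h}^2$ \emph{independent of} $h$; only the lower-order error term $C_h\|u\|_{k-1,h}\|u\|_{k,h}$ is allowed to depend on $h$. The natural device is to set $v = e^{-h/2}u$ and apply the unweighted estimate for tangential $(1,0)$-derivatives to $v$. Then $\|Yu\|_{k-1,h}^2 = \|e^{-h/2}Yu\|_{k-1}^2$; commuting $e^{-h/2}$ past $Y$ produces $Y v$ plus a term $(Yh/2)\,e^{-h/2}u$ of the same order, whence $\|Yu\|_{k-1,h}^2 \leq C\|Yv\|_{k-1}^2 + C_h\|u\|_{k-1,h}^2$. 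The unweighted estimate bounds $\|Yv\|_{k-1}^2$ by $C(\|\dbar v\|_{k-1}^2 + \|\dbar^* v\|_{k-1}^2) + C\|v\|_{k-1}\|v\|_k$, with $C$ universal. It remains to convert the right-hand side back to weighted norms of $u$. Here $\|v\|_{k-1}\|v\|_k = \|u\|_{k-1,h}\|u\|_{k,h}$ exactly (by definition of the weighted norms), which lands precisely in the allowed error term.

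The remaining and main technical obstacle is estimating $\|\dbar v\|_{k-1}^2 + \|\dbar^* v\|_{k-1}^2 = \|\dbar(e^{-h/2}u)\|_{k-1}^2 + \|\dbar^*(e^{-h/2}u)\|_{k-1}^2$ by $C(\|\dbar u\|_{k-1,h}^2+\|\dbar^* u\|_{k-1,h}^2) + C_h\|u\|_{k-1,h}\|u\|_{k,h}$ with the leading constant $C$ \emph{not} depending on $h$. Write $\dbar(e^{-h/2}u) = e^{-h/2}\dbar u - \tfrac12 e^{-h/2}(\dbar h)\wedge u$ and similarly $\dbar^*(e^{-h/2}u) = e^{-h/2}\dbar^* u - \tfrac12 e^{-h/2}\,\iota_{(\dbar h)^\sharp}u$ (interior product); the first terms give exactly $\|\dbar u\|_{k-1,h}^2$ and $\|\dbar^* u\|_{k-1,h}^2$ up to a universal constant, while the commutator/zeroth-order terms have coefficients involving $\dbar h$ and its derivatives, which are $C_h$-bounded. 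Each derivative of order $\le k-1$ falling on such a term produces either a factor controlled by $\|u\|_{k-1,h}$ (when it hits the coefficient) or a genuine order-$(k-1)$ derivative of $u$ times a $C_h$-bounded coefficient; Cauchy--Schwarz then absorbs these into $C_h\|u\|_{k-1,h}\|u\|_{k,h}$ (or into $C_h\|u\|_{k-1,h}^2$, which is smaller). Chaining these estimates — unweighted Lemma applied to $v$, commutation of the weight past $Y$, $\dbar$, $\dbar^*$, and bookkeeping of the error terms — yields \eqref{benign2} with $h$-independent $C$ in front of the good terms. This last bookkeeping, making sure no power of $\|\nabla h\|$ sneaks into the leading constant, is the only place requiring care.
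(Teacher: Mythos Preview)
Your argument for \eqref{benign2} is correct and is exactly the paper's approach: set $v=e^{-h/2}u$, apply the unweighted Lemma~5.6 of \cite{Straube10a}, and track that every time a derivative lands on $e^{-h/2}$ the resulting term is of order zero in $u$ with a $C_h$-bounded coefficient, hence lands in $C_h\|u\|_{k-1,h}\|u\|_{k,h}$ (or $C_h\|u\|_{k-1,h}^2$).

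There is, however, a genuine gap in your treatment of \eqref{benign1}. You apply the unweighted estimate to $v=u$ and then pass to weighted norms by norm equivalence. As you yourself note, this costs a factor $C_h$ on \emph{every} term, so you end up with $C_h(\|\overline{\partial}u\|_{k-1,h}^{2}+\|\overline{\partial}^{*}u\|_{k-1,h}^{2})$ on the right, not the $h$-independent $C$ the lemma asserts. That distinction is the whole point of the lemma (and is used downstream: the weights are $h_\eta$ with $\eta\to 1$, and constants in front of the $\overline{\partial}/\overline{\partial}^*$ terms must stay uniform in $\eta$).

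The fix is to do for \eqref{benign1} exactly what you did for \eqref{benign2}: apply the unweighted estimate to $v=e^{-h/2}u$, not to $u$. On the left, $\partial v_J/\partial\overline{z_j}=e^{-h/2}\partial u_J/\partial\overline{z_j}-\tfrac12(\partial h/\partial\overline{z_j})e^{-h/2}u_J$, so $\|\partial u_J/\partial\overline{z_j}\|_{k-1,h}^2\le C\|\partial v_J/\partial\overline{z_j}\|_{k-1}^2+C_h\|u\|_{k-1,h}^2$ with a universal $C$. On the right, $\overline{\partial}v=e^{-h/2}\overline{\partial}u-\tfrac12 e^{-h/2}\overline{\partial}h\wedge u$ and similarly for $\overline{\partial}^*v$; the leading pieces give $\|\overline{\partial}u\|_{k-1,h}^2+\|\overline{\partial}^*u\|_{k-1,h}^2$ with a universal constant, and all commutator terms are dominated by $C_h\|u\|_{k-1,h}^2$. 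This yields \eqref{benign1} as stated. The paper's proof does precisely this, uniformly for both estimates.
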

Writing weights as exponentials is convenient in the context of Kohn--Morrey--H\"{o}rmander type formulas, but somewhat artificial in the lemma, as only the smoothness of $e^{-h/2}$ matters. We keep the convention so we do not need to introduce additional notation. 
\begin{proof}[Proof of Lemma \ref{benign}]
 Apply the unweighted version (\cite{Straube10a}, Lemma 5.6) to the form $e^{-h/2}u\in dom(\overline{\partial}^{*})$ and observe that if a derivative $D$ hits $e^{-h/2}$, the resulting term is of the form $D(e^{-h/2})u$ and is thus dominated by $C_{h}\|u\|_{k-1,h}^{2}$ and $C_{h}\|u\|_{k-1,h}\|u\|_{k,h}$, respectively.
 \end{proof}

\smallskip

There are two points in the proof of Theorem \ref{main} where we need  an (unweighted) estimate that is stronger than \eqref{benign2}, namely (with $Y$ as in Lemma \ref{benign})
\begin{equation}\label{max}
 \|Yu\|_{k-1}^{2} \leq C\left(\|\overline{\partial}u\|_{k-1}^{2}+\|\overline{\partial}^{*}u\|_{k-1}^{2}\right) + C\|u\|_{k-1}^{2} \;.
\end{equation}
Such estimates, referred to as maximal estimates, play an important role in the theory of the $\overline{\partial}$--Neumann problem (see for example the introduction in \cite{Koenig15}).
We will however use Lemma \ref{benign} where possible, so as to minimize the use of these stronger estimates.

The necessary and sufficient condition for maximal estimates to hold is that $q$--sums of eigenvalues of the Levi eigenvalues dominate the trace of the Levi form (\cite{Derridj78}, Th\'{e}or\`{e}me 3.1 for $q=1$, \cite{BenMoussa00}, Th\'{e}or\`{e}me 3.7 for $q>1$). More precisely, this condition means that there is a constant $C$ such that for any point $p\in b\Omega$ and $j_{1},\cdots,j_{q}$ with $1\leq j_{1}<\cdots<j_{q}\leq (n-1)$, it holds that $C(\lambda_{1}(p)+\lambda_{2}(p)+\cdots+\lambda_{n-1}(p)) \leq \lambda_{j_{1}}(p)+\cdots+\lambda_{j_{q}}(p)$, where $\lambda_{1}(p)$, $\cdots$, $\lambda_{(n-1)}(p)$ denote the eigenvalues of the Levi form at $p$, listed with multiplicity These eigenvalues are independent of the basis as long as they are taken with respect to an orthonormal basis\footnote{In \cite{CSS20}, footnote 1, the authors point out that thanks to a theorem of Ostrowski (\cite{HornJohnson85}, Theorem 4.5.9) the comparability condition remains independent of the basis even when the orthonormality  requirement is dropped.}. Because $\Omega$ is pseudoconvex, all Levi eigenvalues are non negative, and the previous condition is easily seen to be equivalent to the following one: there exists a constant $C$ such that for  any pair of $q$--tuples $(j_{1}, \cdots, j_{q})$ and $(k_{1}, \cdots, k_{q})$, it holds that $\lambda_{j_{1}}(p)+\cdots+\lambda_{j_{q}}(p) \leq C(\lambda_{k_{1}}(p)+ \cdots+\lambda_{k_{q}}(p))$ (by an argument similar to the one in the proof of Lemma \ref{percup} below). We say for short that $q$--sums of the Levi eigenvalues of $\Omega$ are comparable. Note that for $q=(n-1)$, there is only one $q$--sum, and the comparability condition trivially holds. In particular, the condition trivially holds for domains in $\mathbb{C}^{2}$ and $q=1$.

We will need that the comparability condition on the Levi eigenvalues percolates up the Cauchy--Riemann complex:
\begin{lemma}\label{percup}
 $\Omega$ as above. Assume the comparability condition for $q$--sums of eigenvalues of the Levi form is satisfied for some level $q$. Then it is satisfied at level $(q+1)$.
\end{lemma}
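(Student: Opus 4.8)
The plan is to reduce the statement at level $(q+1)$ to the hypothesis at level $q$ by a purely linear-algebra argument, performed pointwise at each boundary point $p$, so that the constant $C$ produced is uniform in $p$. Fix $p\in b\Omega$ and write $\lambda_1,\dots,\lambda_{n-1}$ for the (nonnegative) Levi eigenvalues at $p$, and abbreviate a $q$-sum $\lambda_{j_1}+\cdots+\lambda_{j_q}$ by $\Lambda_J$ for the $q$-tuple $J=(j_1,\dots,j_q)$. The hypothesis gives a constant $C$ (independent of $p$) with $\Lambda_J\le C\,\Lambda_K$ for all $q$-tuples $J,K$. I want to deduce that $\Lambda_{J'}\le C'\,\Lambda_{K'}$ for all $(q+1)$-tuples $J',K'$, with $C'$ depending only on $C$, $q$, and $n$.

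First I would observe the elementary averaging identity: for any $(q+1)$-tuple $A=(a_1,\dots,a_{q+1})$, the sum $\Lambda_A$ equals $\frac1q$ times the sum of $\Lambda_{A\setminus\{a_i\}}$ over the $q+1$ ways of deleting one index, since each $\lambda_{a_i}$ appears in exactly $q$ of those $q$-subsets. Equivalently, $q\,\Lambda_A=\sum_{i}\Lambda_{A\setminus\{a_i\}}$. Now take two $(q+1)$-tuples $J'$ and $K'$. Pick any particular $q$-subset $K_0\subset K'$; then each $q$-subset $J'\setminus\{j'_i\}$ of $J'$ satisfies $\Lambda_{J'\setminus\{j'_i\}}\le C\,\Lambda_{K_0}$ by the level-$q$ hypothesis. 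Summing over $i=1,\dots,q+1$ and using the averaging identity on the left gives $q\,\Lambda_{J'}=\sum_i\Lambda_{J'\setminus\{j'_i\}}\le (q+1)\,C\,\Lambda_{K_0}$. On the other hand $\Lambda_{K_0}\le\Lambda_{K'}$ because the omitted eigenvalue is nonnegative. Hence $\Lambda_{J'}\le\frac{(q+1)C}{q}\,\Lambda_{K'}$, which is the desired comparability at level $(q+1)$ with constant $C'=(q+1)C/q$, independent of $p$.

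The only real point requiring care is making sure the whole argument is genuinely pointwise and uses only nonnegativity (pseudoconvexity) plus the level-$q$ bound, so that no new $p$-dependence creeps in; that is immediate here. I should also note at the outset that, as recalled in the excerpt, the comparability condition is independent of the choice of orthonormal basis diagonalizing the Levi form, so it suffices to work with the eigenvalues directly. I do not expect a serious obstacle: the step most likely to need a line of justification is the averaging identity $q\,\Lambda_A=\sum_i\Lambda_{A\setminus\{a_i\}}$, which is just counting multiplicities, and the reduction from "dominates the trace / all $q$-sums comparable to each other" formulations, which the excerpt has already shown are equivalent for pseudoconvex domains. By induction starting from the given level $q_0$, comparability then holds at every level between $q_0$ and $n-1$, which is what is needed later in the proof of Theorem \ref{main}.
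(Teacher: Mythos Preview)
Your proof is correct and follows essentially the same approach as the paper: the key observation is the averaging identity $q\,\Lambda_{A}=\sum_{i}\Lambda_{A\setminus\{a_i\}}$ expressing a $(q+1)$-sum as an average of its $q$-subsums, which is exactly what the paper records (in one line) as the ``standard argument.'' You simply make explicit the routine deduction of the comparability constant $C'=(q+1)C/q$ from this identity together with nonnegativity of the eigenvalues; the paper leaves this to the reader.
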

\begin{proof}
 The (standard) argument is to observe that for $\{j_{1}, \cdots, j_{q+1}\}$ fixed, $\lambda_{j_{1}}(p)+\cdots\lambda_{j_{q+1}}(p)$ $= (1/q)\sum_{\{k_{1},\cdots, k_{q}\}\subset\{1,\cdots, q+1\}}(\lambda_{j_{k_{1}}}(p)+\cdots+\lambda_{j_{k_{q}}})$.
\end{proof}

\smallskip

Fix a defining function $\rho$ for $\Omega$. Near $b\Omega$, set $L_{n}^{\rho}=(1/\sum_{j}|\partial\rho/\partial z_{j}|^{2})\sum_{j=1}^{n}(\partial\rho/\partial\overline{z_{j}})\partial/\partial z_{j}$, and continue it smoothly to all of $\Omega$. Then $L_{n}^{\rho}\rho \equiv 1$ near $b\Omega$ (so $L_{n}^{\rho}$ is in general normalized differently from the unit normal $L_{n}$). We also define $T^{\rho}:=L_{n}^{\rho}-\overline{L_{n}^{\rho}}$, and $\sigma^{\rho}:=(1/2)(\partial\rho-\overline{\partial}\rho)$; note that $\sigma^{\rho}(T^{\rho})\equiv 1$ near $b\Omega$. The D'Angelo $1$--form is defined as $\alpha^{\rho}= -\mathcal{L}_{T^{\rho}}\sigma^{\rho}$, where $\mathcal{L}_{T^{\rho}}$ denotes the Lie derivative in the direction of $T^{\rho}$. Although $\alpha^{\rho}$ depends on the defining function $\rho$, all its relevant properties are intrinsic (that is, do not depend on the choice of $\rho$). We will use various of these properties, all of which can be found in \cite{Straube10a}. In particular, if $L=\sum_{k=1}^{n}w_{k}(\partial/\partial z_{k})$ is a local section of $T^{(1,0)}(b\Omega_{\varepsilon})$, where $\Omega_{\varepsilon} = \{z\in\Omega\;|\;\rho(z)<-\varepsilon\}$, $0\leq\varepsilon$ small, then
 \begin{equation}\label{alpha0}
  \alpha^{\rho}(\overline{L}) = \partial\rho([L_{n}^{\rho},\overline{L}]) = \frac{1}{\sum_{j}|\partial\rho/\partial z_{j}|^{2}}\sum_{j,k=1}^{n}\frac{\partial^{2}\rho}{\partial z_{j}\partial\overline{z_{k}}}\,\frac{\partial\rho}{\partial\overline{z_{j}}}\,\overline{w_{k}}\;;
 \end{equation}
(see \cite{Straube10a}, (5.76) and (5.85)). Moreover, if $\rho_{h}=e^{h}\rho$ is another defining function, then
\begin{equation}\label{alpha00}
 \alpha_{h}(L):=\alpha^{\rho_{h}}(L) = \alpha^{\rho}(L) + \partial h(L)
\end{equation}
(\cite{Straube10a}, (5.84)). Also note that $\alpha^{\rho}(T^{\rho})=0$ (from the definition and the normalizations).

The forms $\alpha^{\rho}$ play a crucial role for Sobolev estimates in the $\overline{\partial}$--Neumann problem in that they control commutators of certain vector fields with $\overline{\partial}$ and $\overline{\partial}^{*}$(compare \eqref{alpha0} above, \cite{Straube10a}, \cite{HarringtonLiu20},\cite{DAM}). We recall two lemmas from \cite{HarringtonLiu20} that give explicit formulas for these commutators, modulo benign errors. These formulas are stated for forms supported in a special boundary chart, with the usual orthonormal boundary frame $L_{1},\cdots, L_{n}$ and dual frame $\omega_{1}, \cdots, \omega_{n}$.
Vector fields (differential operators) on forms are acting  coefficient wise in this special chart. Let $v=\sideset{}{'}\sum_{J}v_{J}\overline{\omega_{J}}\in C^{\infty}_{(0,q)}(\overline{\Omega})$, and let $0\leq q\leq n$. In our notation, and with our normalizations, the formulas from \cite{HarringtonLiu20} are as follows.
\begin{lemma}[\cite{HarringtonLiu20}, Lemma 3.1]\label{dbar}
 Let $k\in\mathbb{N}$, $h\in C^{\infty}(\overline{\Omega})$, real valued, and $\rho$ a defining function for $\Omega$. Then
 \begin{multline}\label{dbarest}
 [\overline{\partial},(e^{-h}(L_{n}^{\rho}-\overline{L_{n}^{\rho}}))^{k}]v \\
 = -k\sum_{j}\sideset{}{'}\sum_{J}(\overline{L_{j}}h+\alpha^{\rho}(\overline{L_{j}}))((e^{-h}(L_{n}^{\rho}-\overline{L_{n}^{\rho}}))^{k}v_{J})\;\overline{\omega_{j}}\wedge\overline{\omega_{J}} + A_{h}(v) + B_{h}(v)  \;,
 \end{multline}
where $A_{h}(v)$ consists of terms of order $k$ with at least one of the derivatives being barred or complex tangential, and $B_{h}(v)$ consist of terms of order at most $(k-1)$.
\end{lemma}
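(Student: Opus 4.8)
\emph{Proof idea.} Write $T:=e^{-h}(L_{n}^{\rho}-\overline{L_{n}^{\rho}})$ for the vector field in question. In the special chart $T^{k}$ acts coefficientwise, and $\overline{\partial}v=\sum_{j}\sideset{}{'}\sum_{J}(\overline{L_{j}}v_{J})\,\overline{\omega_{j}}\wedge\overline{\omega_{J}}+\sideset{}{'}\sum_{J}v_{J}\,\overline{\partial}\,\overline{\omega_{J}}$; since the map $v\mapsto\sideset{}{'}\sum_{J}v_{J}\,\overline{\partial}\,\overline{\omega_{J}}$ is an operator of order $0$, its commutator with $T^{k}$ has order $\leq k-1$ and can be absorbed into $B_{h}(v)$. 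So the problem reduces to computing $[\overline{L_{j}},T^{k}]$ acting on the coefficient functions $v_{J}$. The plan is to first settle $k=1$ and then pass to general $k$ via the telescoping identity $[\overline{L_{j}},T^{k}]=\sum_{i=0}^{k-1}T^{i}[\overline{L_{j}},T]\,T^{k-1-i}$.

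For $k=1$, the key input is \eqref{alpha0}. Because $\partial\rho(L_{l})=0$ for $l=1,\dots,n-1$, $\partial\rho$ annihilates every $(0,1)$ field, and $\partial\rho(L_{n}^{\rho})=L_{n}^{\rho}\rho\equiv1$ near $b\Omega$, the identity $\alpha^{\rho}(\overline{L_{j}})=\partial\rho([L_{n}^{\rho},\overline{L_{j}}])$ identifies $\alpha^{\rho}(\overline{L_{j}})$ as exactly the $L_{n}^{\rho}$-component of $[L_{n}^{\rho},\overline{L_{j}}]$ in the frame $L_{1},\dots,L_{n-1},L_{n}^{\rho},\overline{L_{1}},\dots,\overline{L_{n}}$. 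Hence $[\overline{L_{j}},L_{n}^{\rho}]=-\alpha^{\rho}(\overline{L_{j}})L_{n}^{\rho}+G_{j}$ with $G_{j}$ a smooth section of $\mathrm{span}\{L_{1},\dots,L_{n-1},\overline{L_{1}},\dots,\overline{L_{n}}\}$; applied to a coefficient, $G_{j}$ produces only complex tangential $(1,0)$ derivatives and barred derivatives. Feeding this into $[\overline{L_{j}},e^{-h}X]=-(\overline{L_{j}}h)e^{-h}X+e^{-h}[\overline{L_{j}},X]$, using that $[\overline{L_{j}},\overline{L_{n}^{\rho}}]$ is again of type $(0,1)$, and rewriting $e^{-h}L_{n}^{\rho}=T+e^{-h}\overline{L_{n}^{\rho}}$, I would obtain
\begin{equation*}
 [\overline{L_{j}},T]=-\bigl(\overline{L_{j}}h+\alpha^{\rho}(\overline{L_{j}})\bigr)T+E_{j},
\end{equation*}
where $E_{j}$, like $G_{j}$, is a smooth section of $\mathrm{span}\{L_{1},\dots,L_{n-1},\overline{L_{1}},\dots,\overline{L_{n}}\}$---in particular it has no $L_{n}^{\rho}$-component. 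Wedging with $\overline{\omega_{j}}$ and summing over $j$ then yields \eqref{dbarest} for $k=1$: the first term gives the asserted leading term, $E_{j}v_{J}$ contributes to $A_{h}(v)$, and the structure terms to $B_{h}(v)$.

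For general $k$ I would substitute the $k=1$ formula into the telescoping identity. In each term $T^{i}\circ\bigl(\text{multiplication by }(-\overline{L_{j}}h-\alpha^{\rho}(\overline{L_{j}}))\bigr)\circ T^{k-i}$, commuting the multiplication operator past $T^{i}$ costs only operators of order $\leq k-1$, and the $k$ top-order pieces (one for each $i$) sum to $-k\bigl(\overline{L_{j}}h+\alpha^{\rho}(\overline{L_{j}})\bigr)T^{k}$, the leading term. In each term $T^{i}E_{j}T^{k-1-i}$, commuting $E_{j}$ to the left past $T^{k-1-i}$ again costs only order $\leq k-1$ and leaves $T^{k-1}E_{j}$; applying $T^{k-1}E_{j}$ to a coefficient yields, modulo order $\leq k-1$, an order-$k$ term still carrying the complex tangential $(1,0)$ or barred derivative contributed by $E_{j}$, hence a term of $A_{h}(v)$. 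Assembling all the pieces and restoring the wedge factors $\overline{\omega_{j}}$ gives \eqref{dbarest}.

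The step I expect to be the main obstacle is precisely this bookkeeping of orders: one must check that commuting multiplication operators and the fields $E_{j}$ past powers of $T$ never manufactures an order-$k$ term all of whose derivatives are normal, as such a term would fit neither the leading term nor $A_{h}$ nor $B_{h}$. This is exactly where it is essential that $\alpha^{\rho}(\overline{L_{j}})$ absorbs the \emph{entire} $L_{n}^{\rho}$-component of $[\overline{L_{j}},L_{n}^{\rho}]$, so that the residual field $E_{j}$ is genuinely complex tangential-or-barred and the extra derivative it carries is benign.
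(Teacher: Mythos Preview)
Your proposal is correct and follows essentially the same route as the paper's own argument: compute the case $k=1$ in the boundary frame (isolating the $L_{n}^{\rho}$--component of $[\overline{L_{j}},L_{n}^{\rho}]$ via \eqref{alpha0}), then pass to general $k$ by a commutator expansion. The only cosmetic difference is that you use the telescoping identity $[\overline{L_{j}},T^{k}]=\sum_{i=0}^{k-1}T^{i}[\overline{L_{j}},T]T^{k-1-i}$, whereas the paper invokes the equivalent nested--commutator formula $[A,T^{k}]=\sum_{j=1}^{k}\binom{k}{j}[\cdots[[A,T],T]\cdots]T^{k-j}$; both produce the leading coefficient $k$ and relegate the rest to $A_{h}$ or $B_{h}$ for the same reasons.
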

We also have
\begin{lemma}[\cite{HarringtonLiu20}, Lemma 3.2]\label{dbar*}
With the assumptions as in Lemma \ref{dbar}, but additionally $v\in dom(\overline{\partial}^{*})$ and $q\geq 1$. Then
\begin{multline}\label{dbar*est}
 [\overline{\partial}^{*},(e^{-h}(L_{n}^{\rho}-\overline{L_{n}^{\rho}}))^{k}]v \\
 = -k\sum_{j}\sideset{}{'}\sum_{S}(L_{j}h+\alpha^{\rho}(L_{j}))((e^{-h}(L_{n}^{\rho}-\overline{L_{n}^{\rho}}))^{k}v_{jS})\;\overline{\omega_{S}} + A_{h}(v) + B_{h}(v) ,
\end{multline}
where $A_{h}(v)$ and $B_{h}(v)$ are of the same type as in Lemma \ref{dbar} (but different operators). 
\end{lemma}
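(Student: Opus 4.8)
Abbreviate $X_{h}:=e^{-h}(L_{n}^{\rho}-\overline{L_{n}^{\rho}})=e^{-h}T^{\rho}$, so the operator whose commutator with $\overline{\partial}^{*}$ is wanted is $X_{h}^{k}$; note that $X_{h}$ differentiates only in the tangential direction $T^{\rho}$ (recall $T^{\rho}\rho\equiv 0$ near $b\Omega$) and acts coefficient--wise in the special chart. The plan is to begin from the frame expression of $\overline{\partial}^{*}$ on the domain,
\[
 \overline{\partial}^{*}v \;=\; \sideset{}{'}\sum_{|S|=q-1}\Bigl(\sum_{j=1}^{n}L_{j}v_{jS}\Bigr)\overline{\omega_{S}} \;+\; (\text{terms of order }0\text{ in }v),
\]
valid for $v\in dom(\overline{\partial}^{*})$ supported in the chart (see \cite{ChenShaw01}, \cite{Straube10a}; the sign and the order--$0$ coefficients, built from structure functions of the frame, are those fixed by the conventions at hand), and to commute $X_{h}^{k}$ through this term by term. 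The order--$0$ coefficients lie in $C^{\infty}(\overline{\Omega})$, so their commutators with $X_{h}^{k}$ have order $\leq k-1$ and join $B_{h}(v)$; thus everything reduces to $[\overline{\partial}^{*},X_{h}^{k}]v=\sideset{}{'}\sum_{|S|=q-1}\sum_{j}[L_{j},X_{h}^{k}]\,v_{jS}\,\overline{\omega_{S}}+B_{h}(v)$. One point must be recorded here: $X_{h}^{k}v$ need not lie in $dom(\overline{\partial}^{*})$, so strictly one works throughout with the formal adjoint $\vartheta$; the discrepancy between $\overline{\partial}^{*}(X_{h}^{k}v)$ and $\vartheta(X_{h}^{k}v)$ is an operator of order $k$ supported on the normal component $v_{N}$ and is absorbed into $C_{h}(v_{N})$, together with the $j=n$ summand below.

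The decisive step is the geometric computation of $[L_{j},X_{h}]$ for $j<n$. Writing $[L_{j},X_{h}]=-(L_{j}h)X_{h}+e^{-h}\bigl([L_{j},L_{n}^{\rho}]-[L_{j},\overline{L_{n}^{\rho}}]\bigr)$, I would locate the ``bad'' parts of the two remaining brackets, namely their components along $L_{n}^{\rho}$ and along $\overline{L_{n}^{\rho}}$. Applying Cartan's formula $d\beta(Y,Z)=Y\beta(Z)-Z\beta(Y)-\beta([Y,Z])$ with $\beta=\partial\rho$ and with $\beta=\overline{\partial}\rho$, using $\partial\rho(L_{j})=\overline{\partial}\rho(L_{j})=0$ (tangentiality of $L_{j}$ for $j<n$), the normalizations $\partial\rho(L_{n}^{\rho})=\overline{\partial}\rho(\overline{L_{n}^{\rho}})=1$, the fact that the $(1,1)$--form $\partial\overline{\partial}\rho$ annihilates pairs of $(1,0)$ vectors, and the identity \eqref{alpha0} together with the reality of $\alpha^{\rho}$ (so that $\alpha^{\rho}(L_{j})=\overline{\alpha^{\rho}(\overline{L_{j}})}$), one finds that $[L_{j},L_{n}^{\rho}]$ is complex tangential (a $(1,0)$ field whose $L_{n}^{\rho}$--component vanishes), while $[L_{j},\overline{L_{n}^{\rho}}]=\alpha^{\rho}(L_{j})T^{\rho}+W_{j}$ with $W_{j}$ complex tangential (its $L_{n}^{\rho}$-- and $\overline{L_{n}^{\rho}}$--components being $\alpha^{\rho}(L_{j})$ and $-\alpha^{\rho}(L_{j})$). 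Hence
\[
 [L_{j},X_{h}] \;=\; -\bigl(L_{j}h+\alpha^{\rho}(L_{j})\bigr)\,X_{h} \;+\; e^{-h}\widetilde{W}_{j},\qquad \widetilde{W}_{j}\ \text{complex tangential}.
\]
Run with $\overline{L_{j}}$ in place of $L_{j}$, the same computation gives $[\overline{L_{j}},X_{h}]=-(\overline{L_{j}}h+\alpha^{\rho}(\overline{L_{j}}))X_{h}+e^{-h}(\text{complex tangential})$, which is exactly what is needed to reproduce Lemma \ref{dbar}; this serves as a useful consistency check.

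Finally, I would substitute into $[L_{j},X_{h}^{k}]=\sum_{m=0}^{k-1}X_{h}^{m}[L_{j},X_{h}]X_{h}^{k-1-m}$ and sort the output. The $-(L_{j}h+\alpha^{\rho}(L_{j}))X_{h}$ part yields $-k(L_{j}h+\alpha^{\rho}(L_{j}))X_{h}^{k}$ plus terms of order $\leq k-1$ (coming from an $X_{h}^{m}$ landing on the coefficient), i.e.\ a $B_{h}(v)$--contribution; summed over $j<n$ and over $S$ this is, up to the global sign, the displayed main term of \eqref{dbar*est}. The $e^{-h}\widetilde{W}_{j}$ part yields terms of order $k$ each carrying the complex tangential derivative $\widetilde{W}_{j}$, i.e.\ an $A_{h}(v)$--contribution. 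The $j=n$ summand $\sum_{m}X_{h}^{m}[L_{n},X_{h}]X_{h}^{k-1-m}v_{nS}$, together with the domain discrepancy from the first paragraph, is an operator of order $k$ acting on $v_{N}=\sideset{}{'}\sum_{S}v_{Sn}\,(\overline{\omega_{S}}\wedge\overline{\omega_{n}})$, hence of type $C_{h}(v_{N})$. Assembling the three error types gives \eqref{dbar*est}. The step I expect to be the real work is this last, bookkeeping one: one must check that \emph{no} residual term of order $k$ all of whose derivatives are of $T^{\rho}$--type survives, outside the displayed main sum, on a component other than $v_{N}$; it is precisely the vanishing of the $L_{n}^{\rho}$--component of $[L_{j},L_{n}^{\rho}]$ (equivalently, $\partial\overline{\partial}\rho$ vanishing on $(1,0)\times(1,0)$) together with \eqref{alpha0} that guarantees this.
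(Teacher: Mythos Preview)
Your proposal is correct and follows essentially the same route that the paper sketches after stating the two lemmas: compute the commutator $[L_{j},X_{h}]$ (the case $k=1$) in the boundary frame, identify its $T^{\rho}$--component via \eqref{alpha0} as $-(L_{j}h+\alpha^{\rho}(L_{j}))$, and then pass to general $k$ by a commutator expansion; you use the telescoping identity $[L_{j},X_{h}^{k}]=\sum_{m}X_{h}^{m}[L_{j},X_{h}]X_{h}^{k-1-m}$, the paper alludes to the equivalent iterated--commutator formula $[A,T^{k}]=\sum_{j}\binom{k}{j}[\cdots[A,T]\cdots,T]T^{k-j}$, but the bookkeeping is the same. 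One minor remark: your domain caveat is harmless but unnecessary here, since $X_{h}$ is tangential and acts coefficient--wise in the special chart, so $X_{h}^{k}v\in dom(\overline{\partial}^{*})$ whenever $v$ is; the $C_{h}(v_{N})$ term arises solely from the $j=n$ summand.
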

Note that when $j<n$, then $\overline{L_{j}}h+\alpha^{\rho}(\overline{L_{j}})$ equals $\alpha_{h}(\overline{L_{j}})$ (in view of \eqref{alpha00}). Likewise, $L_{j}h+\alpha^{\rho}(L_{j}) = \alpha_{h}(L_{j})$ when $j<n$. Thus the form $\alpha_{h}$ controls the commutators in \eqref{dbarest} and \eqref{dbar*est}, except for the term with $j=n$ in \eqref{dbarest}. However, it has been understood at least since \cite{BoasStraube91a} that control of this term comes for free. The error terms $A_{h}(v)$, $B_{h}(v)$, and $C_{h}(v_{N})$ are normally under control ($B_{h}(v)$ is lower order, for $A_{h
}(v)$ use Lemma \ref{benign}, and the normal component acts as a subelliptic multiplier, see estimate (2.96) in \cite{Straube10a}; Proposition 4.7, part (G), in \cite{Kohn79}). In fact, in \cite{HarringtonLiu20} the error terms are given only in this estimated form. On the other hand, for the proof of the lemmas, which results from computing the case $k=1$ in the boundary frame, and then using induction on $k$, it is convenient to have an expression for the terms. The factor $k$ on the right hand sides of \eqref{dbarest} and \eqref{dbar*est} arises in the induction from the commutator formula $[A, T^{k}] = \sum_{j=1}^{k}\binom{k}{j}[\cdots[[A,T],T]\cdots]\,T^{k-j} = k\,[[A, T], T]T^{k-1}$ $+$ terms of order at most $(k-1)$ (see \cite{DerridjTartakoff76}, Lemma 2 on page 418, or \cite{Straube10a}, formula (3.54)).

\smallskip

Denote by $DF(\Omega)$ the Diederich--Forn\ae ss index of the domain $\Omega$. Then, if $0 <\eta < DF(\Omega)$, there is a defining function $\rho_{\eta}$ so that $-(-\rho_{\eta})^{\eta}$ is plurisubharmonic in $\Omega$ near the boundary. Namely, by the definition of the index, there is $\eta_{1}$, $\eta < \eta_{1} < DF(\Omega)$, and a defining function $\rho_{\eta_{1}}$, such that $-(-\rho_{\eta_{1}})^{\eta_{1}}$ is plurisubharmonic near the boundary. Then so is $-(-\rho_{\eta_{1}})^{\eta}$ (compose $-(-\rho_{\eta_{1}})^{\eta_{1}}$ with the increasing convex function $-(-x)^{\eta/\eta_{1}}$, $x \leq 0$); thus $\rho_{\eta}:=\rho_{\eta_{1}}$ will do for $\eta$. For such $\eta$, there is then a function $h_{\eta}\in C^{\infty}(\overline{\Omega})$ such that near the boundary $\rho_{\eta} = e^{h_{\eta}}\rho$ ($\rho$ is the fixed defining function from above).

\section{Diederich-Forn\ae ss index and estimates on D'Angelo forms}\label{index-form}

Our proof of Theorem \ref{main} depends on recent work in \cite{Liu19}, as reformulated in \cite{Yum21}, where it is shown that the Diederich--Forn\ae ss index of a domain can be expressed in terms of the form $\alpha_{\eta}:=\alpha^{\rho_{\eta}}$. Denote by $\omega_{\eta}$ the $(1,0)$--part of $\alpha_{\eta}$; then $\alpha_{\eta} = \omega_{\eta}+\overline{\omega_{\eta}}$ (there is no ambiguity as to the these parts for the real one form $\alpha_{\eta}$ on the boundary, because $\alpha_{\eta}(T)=0$, see also the discussion in Section 4.3 of \cite{DAM}). The formulation most convenient for us is in Theorem 1.1 in \cite{Yum21}:
\begin{equation}\label{DFalpha}
 DF(\Omega) = \sup\left\{0<\eta<1:\left(\frac{\eta}{1-\eta}(\omega_{\eta}\wedge\overline{\omega_{\eta}})-\overline{\partial}\omega_{\eta}\right)(L\wedge\overline{L}) \leq 0\;;\;p\in \Sigma\;,\; L\in\mathcal{N}_{p}\right\}\;,
\end{equation}
where $\Sigma\subset b\Omega$ denotes the set of weakly pseudoconvex boundary points, $\mathcal{N}_{p}$ denotes the null space of the Levi form at $p\in b\Omega$, and the supremum is over all $\eta$ so that there exists a defining function $\rho_{\eta}$ such that the inequality holds. Because $|\alpha_{\eta}(L)|^{2}=(\omega_{\eta}\wedge\overline{\omega_{\eta}})(L\wedge\overline{L})$ and $\overline{\partial}\omega_{\eta}(L\wedge\overline{L})=\overline{\partial}\alpha_{\eta}(L\wedge\overline{L})$, \eqref{DFalpha} implies $|\alpha_{\eta}(L(p))|^{2} \lesssim (1-\eta)\overline{\partial}\alpha_{\eta}(L(p)\wedge\overline{L(p)})\;;\;p\in \Sigma\;,\;L(p)\in\mathcal{N}_{p}$. Section 4.3 in \cite{DAM}  (see in particular Lemmas 4.6 and 4.7) gives that for $p\in \Sigma$, $L(p)\in\mathcal{N}_{p}$, we have $\overline{\partial}\alpha_{\eta}(L(p)\wedge\overline {L(p)})=\overline{\partial}\alpha^{\rho}(L(p)\wedge\overline{L(p)})-\partial\overline{\partial}h_{\eta}(L(p)\wedge\overline{L_{p}})$. Consequently, we have, for some constant $C$ that does not depend on $\eta$,
\begin{equation}\label{hessian}
 |\alpha_{\eta}(L(p))|^{2} \leq C(1-\eta)\left(\sum_{j,k=1}^{n}\frac{\partial^{2}(-h_{\eta})}{\partial z_{j}\partial\overline{z_{k}}}L_{j}\overline{L_{k}} + |L|^{2}\right)\;;\;p\in \Sigma\;,\;L(p)\in\mathcal{N}_{p}\;;
\end{equation}
the $|L|^{2}$ term comes from $\overline{\partial}\alpha$.

To obtain an estimate when $L(p)$ is merely complex tangential at the boundary, we use that fact that in strictly pseudoconvex directions, the Levi form dominates vectors. We first make \eqref{hessian} strict for $|L|=1$ by adding $C(1-\eta)|L|^{2}$ to the right hand side:
\begin{equation}\label{hessian2}
 |\alpha_{\eta}(L(p))|^{2} < C(1-\eta)\left(\sum_{j,k=1}^{n}\frac{\partial^{2}(-h_{\eta})}{\partial z_{j}\partial\overline{z_{k}}}L_{j}\overline{L_{k}} + 2|L|^{2}\right)\,;\,p\in \Sigma,L(p)\in\mathcal{N}_{p}, |L(p)|=1\,.
\end{equation}
Denote by $S^{1,0}(b\Omega)$ the unit sphere bundle in $T^{1,0}(b\Omega)$. The set $A:=\{(p,L)\;|\;p\in\Sigma,L\in \mathcal{N}_{p}\}$ is a compact subset of $S^{1,0}(b\Omega)$, and \eqref{hessian2} holds for $(p,L)\in A$. By continuity, it holds for $(p,L)$ in an open neighborhood $U_{\eta}$ of $A$ (in $S^{1,0}(b\Omega)$). On $S^{1,0}(b\Omega)\setminus U_{\eta}$, $\sum_{j,k}\frac{\partial^{2}\rho}{\partial z_{j}\partial\overline{z_{k}}}L_{j}\overline{L_{k}}$ admits a strictly positive lower bound ($S^{1,0}(b\Omega)\setminus U_{\eta}$ is compact). Therefore, for a big enough constant $M_{\eta}$, we have
\begin{multline}\label{esta0}
 |\alpha_{\eta}(L)|^{2} < C(1-\eta)\left(\sum_{j,k}\frac{\partial^{2}(-h_{\eta})}{\partial z_{j}\partial\overline{z_{k}}}L_{j}\overline{L_{k}} + 2|L|^{2}\right) 
 + M_{\eta}\sum_{j,k}\frac{\partial^{2}\rho}{\partial z_{j}\partial\overline{z_{k}}}L_{j}\overline{L_{k}}\;,\\(p,L)\in S^{1,0}(b\Omega)\setminus U_{\eta}\;.
\end{multline}
At points of $U_{\eta}$, \eqref{hessian2} holds. Adding the nonnegative term $ M_{\eta}\sum_{j,k}\frac{\partial^{2}\rho}{\partial z_{j}\partial\overline{z_{k}}}L_{j}\overline{L_{k}}$  ($\Omega$ is pseudoconvex) keeps the inequality valid: \eqref{esta0} holds on $U_{\eta}$ as well. So \eqref{esta0} holds on $U_{\eta}$ and on $S^{1,0}(b\Omega)\setminus U_{\eta}$, so holds on $S^{1,0}(b\Omega)$, i.e. for all $(p,L)$, $p\in b\Omega$, $L\in T^{1,0}_{p}(b\Omega)$, $|L|=1$. By continuity, \eqref{esta0} implies that there is a neighborhood $W_{\eta}$ of $b\Omega$, independent of $L$, such that \eqref{esta0} holds on $W_{\eta}$, for $L$ with $|L|=1$ satisfying $\sum_{j=1}^{n}(\partial\rho/\partial z_{j})L_{j}=0$. Homogeneity then gives on $W_{\eta}$:
\begin{multline}\label{esta}
 |\alpha_{\eta}(L)|^{2} \leq C(1-\eta)\left(\sum_{j,k}\frac{\partial^{2}(-h_{\eta})}{\partial z_{j}\partial\overline{z_{k}}}L_{j}\overline{L_{k}} + 2|L|^{2}\right) \\
 + M_{\eta}\sum_{j,k}\frac{\partial^{2}\rho}{\partial z_{j}\partial\overline{z_{k}}}L_{j}\overline{L_{k}}\;;\;\; \sum_{j=1}^{n}(\partial\rho/\partial z_{j})L_{j}=0\;.
\end{multline}
Note that upon inserting \eqref{alpha00} into the left hand side of \eqref{esta}, the estimate becomes, roughly speaking, a self bounded gradient condition for $(-h_{\eta})$ with constant $(1-\eta)$, modulo terms that are either independent of $\eta$, or will turn out to be under control.

\smallskip

In the proof of Theorem \ref{main}, we will need only an integrated version of \eqref{esta}. More precisely, if for a $(0,q)$--form $u=\sideset{}{'}\sum_{K}u_{K}\overline{dz_{K}}\in C^{\infty}_{(0,q)}(\overline{\Omega})\cap dom(\overline{\partial}^{*})$, we set $L_{u}^{J}=\sum_{j}u_{jJ}(\partial/\partial z_{j})$, $J$ an increasing $(q-1)$--tuple, then we need an estimate on $\sideset{}{'}\sum_{J}\int_{\Omega}|\alpha_{\eta}(L^{J}_{u})|^{2}$. However, integrating \eqref{esta} over $\Omega$, with $L=L_{u}^{J}$, requires some care. The reason is that while on the boundary, $L^{J}_{u}\in T^{1,0}(b\Omega)$ (because $u\in dom(\overline{\partial}^{*})$), the condition on $L$ in \eqref{esta} need not hold in general away from the boundary. This situation is remedied by splitting $L^J_{u}$ into its normal and tangential parts near $b\Omega$: $L^J_{u}=(L^J_{u})_{T}+(L^J_{u})_{N}$ . We refer the reader to \cite{Straube10a}, section 2.9 for a detailed discussion of this decomposition. What is important for us is that $\|(L^J_{u})_{N}\|_{1}\leq C(\|\overline{\partial}u\|+\|\overline{\partial}^{*}u\|)$, see \cite{Straube10a}, Lemma 2.12, and that $(L^J_{u})_{T}$ satisfies the requirement on $L$ in \eqref{esta}, near $b\Omega$. Choose a relatively compact subdomain $\Omega_{\eta} \subset\subset\Omega$ so that on $\Omega\setminus\Omega_{\eta}$, the decomposition into tangential and normal part holds, and the condition in \eqref{esta} holds for $(L^J_{u})_{T}$. Then, if we use \eqref{esta} with $L=L_{u}^{J}$, integrate over $\Omega$ and sum over {J}, the error we make (on either side of the inequality) is dominated by $C_{\eta}(\|u\|_{\Omega_{\eta}}^{2}+ \|(L^J_{u})_{N}\|^{2})$. By the estimate on $\|L^J_{u}\|_{1}$ from \cite{Straube10a}, Lemma 2.12 (and the fact that $W^{1}(\Omega)\hookrightarrow L^{2}(\Omega)$ is compact), and by interior elliptic regularity of $\overline{\partial}\oplus\overline{\partial}^{*}$, this error is dominated by $(1-\eta)(\|\overline{\partial}u\|^{2}+\|\overline{\partial}^{*}u\|^{2})+C_{\eta}\|u\|_{-1}^{2}$. For the term coming from the Hessian of $\rho$, we use 
that this Hessian (the Levi form) acts like a subelliptic multiplier (\cite{Kohn79}, Proposition 4.7, part (C), \cite{D'Angelo93}, Proposition 4 in section 6.4.2). The contribution from this term can therefore also be estimated by $(1-\eta)(\|\overline{\partial}u\|^{2}+\|\overline{\partial}^{*}u\|^{2})+C_{\eta}\|u\|_{-1}^{2}$. Putting all of this together, we obtain
\begin{multline}\label{alpha6}
\sideset{}{'}\sum_{J}\int_{\Omega}|\alpha_{\eta}(L_{u}^{J})|^{2} \lesssim (1-\eta)\left(\sideset{}{'}\sum_{J}\int_{\Omega}\sum_{j,k=1}^{n}\frac{\partial^{2}(-h_{\eta})}{\partial z_{j}\partial\overline{z_{k}}}u_{jJ}\overline{u_{kJ}} + \|\overline{\partial}u\|^{2}+\|\overline{\partial}^{*}u\|^{2}\right)+C_{\eta}\|u\|_{-1}^{2}\;;\\
  u\in C^{\infty}_{(0,q)}(\overline{\Omega})\cap dom(\overline{\partial}^{*})\;.
\end{multline}

\smallskip

So far in this section, the assumption on comparable $q$--sums of eigenvalues of the Levi form has not entered the discussion. It will be needed in Proposition \ref{integrated} which gives the estimate on $\sideset{}{'}\sum_{J}\int_{\Omega}|\alpha_{\eta}(L_{u}^{J})|^{2}$ needed to prove Theorem \ref{main}.
\begin{proposition}\label{integrated}
 Assumptions as in Theorem \ref{main}, $q_{0}\leq q\leq (n-1)$. There are a constant $C$ and, for $(1-\eta)$ small enough, a constant $C_{\eta}$, such that 
 \begin{multline}\label{integralest}
 \sideset{}{'}\sum_{J}\int_{\Omega}|\alpha_{\eta}(L_{u}^{J})|^{2} \leq C(1-\eta)(\|\overline{\partial}u\|^{2}+\|\overline{\partial}^{*}u\|^{2}) + C_{\eta}\|u\|_{-1}^{2}\;; \\
 u \in C^{\infty}_{(0,q)}(\overline{\Omega})\cap dom(\overline{\partial}^{*})\;.
 \end{multline}
\end{proposition}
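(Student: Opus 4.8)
The plan is to start from the integrated inequality \eqref{alpha6}, so that the remaining task is to absorb the Hessian term $\sideset{}{'}\sum_{J}\int_{\Omega}\sum_{j,k}(\partial^{2}(-h_{\eta})/\partial z_{j}\partial\overline{z_{k}})u_{jJ}\overline{u_{kJ}}$ into the right-hand side of \eqref{integralest}. The key observation is that while $-h_{\eta}$ need not be plurisubharmonic, its complex Hessian, when paired against the $(q-1)$--extracted vectors $L_{u}^{J}$ and summed over $J$, is bounded above by the analogous pairing against the \emph{trace} of the Levi form, up to the weaker Kohn--Morrey--H\"{o}rmander-type quantities. More precisely, the plurisubharmonicity of $-(-\rho_{\eta})^{\eta}=e^{\eta h_\eta}(-(-\rho))^{\eta}$ near $b\Omega$ gives, after differentiating twice and discarding the favorable Levi-form term, a lower bound of the form $\sum_{j,k}(\partial^{2}h_{\eta}/\partial z_{j}\partial\overline{z_{k}})L_j\overline{L_k} \geq -\frac{\eta}{1-\eta}|\partial\rho(L)|^2/(-\rho) - \text{(controlled)}$ on vectors $L$ near the boundary; equivalently $\sum_{j,k}(\partial^{2}(-h_{\eta})/\partial z_j\partial\overline{z_k})L_j\overline{L_k}$ is bounded \emph{above} by something proportional to $|\partial\rho(L)|^2/(-\rho)$ plus a multiple of the Levi form plus an $\eta$-independent multiple of $|L|^2$. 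The normal parts $|\partial\rho(L_u^J)|$ are $O(\|\dbar u\|+\|\dbar^* u\|)$ near $b\Omega$ via $u\in dom(\dbar^*)$ and Lemma 2.12 of \cite{Straube10a}, so after dividing by $(-\rho)$ one still loses a singular factor, which must be handled by the standard Hardy-type inequality trick (cf.\ \cite{Straube05}).

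Concretely, the steps I would carry out are: (i) write the plurisubharmonicity inequality for $-(-\rho_\eta)^\eta$ explicitly, solving for the Hessian of $h_\eta$; (ii) apply it with $L=(L_u^J)_T$ on $\Omega\setminus\Omega_\eta$, sum over $J$ and integrate, producing on the right a term $C\frac{\eta}{1-\eta}\sideset{}{'}\sum_J\int_{\Omega\setminus\Omega_\eta}|\partial\rho((L_u^J)_T)|^2/(-\rho)$, a term $C_\eta\sideset{}{'}\sum_J\int_\Omega(\text{Levi form})(L_u^J\wedge\overline{L_u^J})$, a term $C_\eta\|u\|^2$, plus errors from the tangential/normal split and from $\Omega_\eta$ dominated by $C_\eta\|u\|^2$; (iii) observe that the Levi-form term is, by the hypothesis that $q_0$--sums (hence, via Lemma \ref{percup}, $q$--sums for all $q_0\le q\le n-1$) of Levi eigenvalues are comparable, controlled by the full Kohn--Morrey--H\"{o}rmander form $\sideset{}{'}\sum_{jK}\int_\Omega(\partial^2\rho/\partial z_j\partial\overline{z_k})u_{jK}\overline{u_{kK}}$, which is itself $\le \|\dbar u\|^2+\|\dbar^* u\|^2+C_\eta\|u\|^2$ — this is where comparability is essential, since otherwise the wrong eigenvalue-sum appears; (iv) handle the singular term: since $(L_u^J)_T$ is tangential, $\partial\rho((L_u^J)_T)$ vanishes on $b\Omega$, so $|\partial\rho((L_u^J)_T)|\le C(-\rho)\,|\text{first derivatives of }u| $ near $b\Omega$ (it is $-\rho$ times a smooth function times the coefficients' derivatives), hence $|\partial\rho((L_u^J)_T)|^2/(-\rho)\le C(-\rho)|\nabla u|^2\le C|\nabla u|^2$, and then use that barred/complex-tangential derivatives of $u$ are benign (Lemma \ref{benign}, or more precisely the unweighted Lemma 5.6 of \cite{Straube10a}) — actually, since $\partial\rho((L_u^J)_T)$ extracts essentially the normal derivative, one uses instead the bound $\|(L_u^J)_N\|_1\le C(\|\dbar u\|+\|\dbar^* u\|)$ together with the fact that the tangential correction is lower order; (v) choose $\eta$ close enough to $1$ that $C(1-\eta)$ absorbs the resulting $(\|\dbar u\|^2+\|\dbar^*u\|^2)$ coefficient, collecting all $\eta$-dependent lower-order leftovers into $C_\eta\|u\|_{-1}^2$ via interior elliptic regularity as in the derivation of \eqref{alpha6}.

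I expect the main obstacle to be step (iv): controlling the singular integral $\sideset{}{'}\sum_J\int_{\Omega\setminus\Omega_\eta}|\partial\rho((L_u^J)_T)|^2/(-\rho)$ with a constant that has the right $(1-\eta)$ behavior. The factor $\eta/(1-\eta)$ multiplying it is large, so it is not enough that this integral be finite — one needs it bounded by $C(1-\eta)^2(\|\dbar u\|^2+\|\dbar^*u\|^2)+C_\eta\|u\|_{-1}^2$, or at least by something the $\eta/(1-\eta)$ prefactor turns into $C(1-\eta)$ times elliptic quantities. This forces a careful accounting: $\partial\rho((L_u^J)_T)$ is not merely $O(-\rho)$ with an absolute constant, but its size relative to $\dbar u,\dbar^* u$ must be extracted, using that on the boundary $L_u^J$ is already tangential (so its normal component, which is what $\partial\rho$ sees, is entirely the "defect" measured by $\dbar u$ and $\dbar^* u$) and that $\|(L_u^J)_N\|_1$ — an $H^1$ bound — combined with a Hardy inequality $\int_{\Omega}|v|^2/(-\rho)^2\lesssim \|v\|_1^2$ for $v$ vanishing on $b\Omega$, kills both singular powers. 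Balancing the $\eta$-dependence of $\Omega_\eta$, $M_\eta$ and the Hardy constants so that the final inequality has the clean form \eqref{integralest} is the delicate bookkeeping at the heart of the proof.
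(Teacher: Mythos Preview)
Your proposal has a sign error that is not cosmetic: the plurisubharmonicity of $-(-\rho_{\eta})^{\eta}$ gives the inequality in the \emph{opposite} direction from what you need. Writing $t=-\rho_{\eta}>0$, one computes $\mathrm{Hess}(-t^{\eta})=\eta(1-\eta)t^{\eta-2}|\partial\rho_{\eta}(L)|^{2}+\eta t^{\eta-1}\mathrm{Hess}(\rho_{\eta})(L,\overline{L})\geq 0$, so $\mathrm{Hess}(\rho_{\eta})(L,\overline{L})\geq -\dfrac{1-\eta}{-\rho_{\eta}}|\partial\rho_{\eta}(L)|^{2}$. Since $\rho_{\eta}=e^{h_{\eta}}\rho$, the left side equals $e^{h_{\eta}}\big[\mathrm{Hess}(\rho)+\rho\,\mathrm{Hess}(h_{\eta})+\rho|\partial h_{\eta}(L)|^{2}+2\Re(\partial h_{\eta}(L)\overline{\partial\rho(L)})\big]$; isolating $\rho\,\mathrm{Hess}(h_{\eta})$ and then dividing by $\rho<0$ \emph{flips} the inequality, producing an \emph{upper} bound on $\mathrm{Hess}(h_{\eta})$, i.e.\ a \emph{lower} bound on $\mathrm{Hess}(-h_{\eta})$. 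This is exactly consistent with \eqref{hessian} (which also encodes a lower bound on $\mathrm{Hess}(-h_{\eta})$ on null directions), but it is useless for bounding the integrated Hessian term in \eqref{alpha6} from above. Your steps (ii)--(iv), and in particular the Hardy-inequality program for the singular term, are built on this reversed inequality and therefore do not lead to \eqref{integralest}.

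The paper's argument gets the correct direction by a completely different device: it applies the weighted Kohn--Morrey--H\"{o}rmander identity with weight $e^{h_{\eta}}$ to $w=e^{-\widetilde{h_{\eta}}/2}u$ (where $\widetilde{h_{\eta}}$ agrees with $h_{\eta}$ on $b\Omega$ and has $\overline{L_{n}}\widetilde{h_{\eta}}=0$ there). This puts $\mathrm{Hess}(-h_{\eta})$ on the \emph{left} of an inequality whose right side is $\|\overline{\partial}w\|_{-h_{\eta}}^{2}+\|\overline{\partial}^{*}_{-h_{\eta}}w\|_{-h_{\eta}}^{2}$, yielding the needed upper bound. The $\overline{\partial}^{*}$ term then produces $|\partial h_{\eta}(L_{u}^{J})|^{2}=|(\alpha_{\eta}-\alpha)(L_{u}^{J})|^{2}$, which feeds back through \eqref{alpha6} with a factor $(1-\eta)$ and is absorbed. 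The $\overline{\partial}$ term produces $\|\overline{\partial}\widetilde{h_{\eta}}\wedge u\|^{2}$, and it is in estimating \emph{this} term---by applying \eqref{alpha6} again to the $(0,1)$-forms $v_{K}\overline{\omega_{j}}$ and then invoking maximal estimates to bound $\|\overline{\partial}(v_{K}\overline{\omega_{j}})\|^{2}+\|\overline{\partial}^{*}(v_{K}\overline{\omega_{j}})\|^{2}$ by $\|\overline{\partial}v\|^{2}+\|\overline{\partial}^{*}v\|^{2}$---that the comparable-eigenvalues hypothesis enters, not in controlling a Levi-form term of the kind you describe in (iii).
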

\begin{proof} The Kohn--Morrey--H\"{o}rmander formula gives for a $(0,q)$--form $w\in C^{\infty}_{(0,q)}(\overline{\Omega})\cap dom(\overline{\partial}^{*})$
\begin{equation}\label{KMH}
 \sideset{}{'}\sum_{|J|=(q-1)}\int_{\Omega}\sum_{j,k}\frac{\partial^{2}(-h_{\eta})}{\partial z_{j}\partial\overline{z_{k}}}w_{jJ}\overline{w_{kJ}}e^{h_{\eta}} \leq \|\overline{\partial}w\|_{-h_{\eta}}^{2} + \|\overline{\partial}^{*}_{-h_{\eta}}w\|_{-h_{\eta}}^{2} \;.
\end{equation}
The idea is now to set $w=e^{-(h_{\eta}/2)}u$ (in view of what is needed in \eqref{alpha6}). The $\overline{\partial}^{*}$--term on the right in \eqref{KMH} then simplifies to $\overline{\partial}^{*}u + (1/2)\sideset{}{'}\sum_{J}\left(\sum_{j}(\partial h_{\eta}/\partial z_{j})u_{jJ}\right)d\overline{z_{J}}$ (see the computation on page 116 in \cite{Straube10a}, where this observation is also used). The term $\sideset{}{'}\sum_{J}\left(\sum_{j}(\partial h_{\eta}/\partial z_{j})u_{jJ}\right)d\overline{z_{J}}$ can be handled via $dh_{\eta}=(\alpha_{\eta}-\alpha)$ on $T^{1,0}(b \Omega)$. However, handling the  $\overline{\partial}$--term on the right hand side of \eqref{KMH} requires a modification of this idea.
Set $w:=e^{-\widetilde{(h_{\eta}}/2)}u$ instead, where $\widetilde{h_{\eta}}$ agrees with $h_{\eta}$ on the boundary, and is extended in such a way that $\overline{L_{n}}\widetilde{h_{\eta}}=0$ on the boundary. This amounts to adding to $h_{\eta}$ a function that vanishes on the boundary and has prescribed normal derivative ($\widetilde{h_{\eta}}$ need no longer be real away from the boundary).
Such a function may be chosen to have absolute value not exceeding $1$ (for example), so that $|h_{\eta}-\widetilde{h_{\eta}}|\leq 1$. This definition will have the effect that the normal component of $\overline{\partial}\widetilde{h_{\eta}}$ vanishes on the boundary. Estimate \eqref{KMH} becomes
\begin{equation}\label{KMH2}
 \sideset{}{'}\sum_{J}\int_{\Omega}\sum_{j,k}\frac{\partial^{2}(-h_{\eta})}{\partial z_{j}\partial\overline{z_{k}}}u_{jJ}\overline{u_{kJ}}e^{(h_{\eta}-\Re\widetilde{h_{\eta}})} \leq \|\overline{\partial}(e^{-(\widetilde{h_{\eta}}/2)}u)\|_{-h_{\eta}}^{2} + \|\overline{\partial}^{*}_{-h_{\eta}}(e^{-(\widetilde{h_{\eta}}/2)}u)\|_{-h_{\eta}}^{2} \;,
\end{equation}
 where $\Re\widetilde{h_{\eta}}$ denotes the real part of $\widetilde{h_{\eta}}$. We have
\begin{multline}\label{est4}
 \|\overline{\partial}(e^{-(\widetilde{h_{\eta}}/2)}u)\|_{-h_{\eta}}^{2} = \|e^{(h_{\eta}/2)}\overline{\partial}(e^{-(\widetilde{h_{\eta}}/2)}u)\|^{2} \\ 
 =\|e^{(h_{\eta}-\widetilde{h_{\eta}})/2}(-\frac{1}{2}\overline{\partial}\widetilde{h_{\eta}}\wedge u+\overline{\partial}u)\|^{2}\lesssim \|\overline{\partial}\widetilde{h_{\eta}}\wedge u\|^{2}+\|\overline{\partial}u\|^{2}\;;
\end{multline}
the last inequality results from the estimate $|h_{\eta}-\widetilde{h_{\eta}}|\leq 1$. It is in controlling the wedge term that we will need the normal component of $\overline{\partial}\widetilde{h_{\eta}}$ to vanish on the boundary. The modification also introduces a (benign) error in the $\overline{\partial}^{*}$--term in \eqref{KMH2}:
\begin{multline}\label{est5}
 \|\overline{\partial}^{*}_{-h_{\eta}}(e^{-(\widetilde{h_{\eta}}/2)}u)\|_{-h_{\eta}}^{2}=\|e^{(h_{\eta}/2)}\overline{\partial}^{*}_{-h_{\eta}}(e^{-(\widetilde{h_{\eta}}/2)}u)\|^{2}  \\
 = \|e^{(h_{\eta}-\widetilde{h_{\eta}})/2}(\overline{\partial}^{*}u-\sideset{}{'}\sum_{J}(\sum_{j=1}^{n}\frac{\partial}{\partial z_{j}}(h_{\eta}-\frac{\widetilde{h_{\eta}}}{2})u_{jJ})d\overline{z_{J}})\|^{2} \;\;\;\;\;\;\;\;\;\;\;\;\;\;\;\;\;\;\;\;\;\;\; \\
 \lesssim\|\overline{\partial}^{*}u\|^{2}+\|\sideset{}{'}\sum_{J}(\sum_{j=1}^{n}\frac{\partial h_{\eta}}{\partial z_{j}}u_{jJ})d\overline{z_{J}}\|^{2}+C_{\eta}(\|\rho u\|^{2}+\|u_{N}\|^{2})\;,
\end{multline}
where $u_{N}$ denotes the normal component of $u$. We have used that the tangential components of $\partial(h_{\eta}-\widetilde{h_{\eta}}/2)$ agree with those of $\partial(h_{\eta}/2)$ at the boundary. Note that $\sum_{j=1}^{n}\frac{\partial h_{\eta}}{\partial z_{j}}u_{jJ}=\partial h_{\eta}(L_{u}^{J})=d h_{\eta}(L_{u}^{J})=(\alpha_{\eta}-\alpha)(L_{u}^{J})$ at the boundary. Therefore, $\|\sum_{j=1}^{n}\frac{\partial h_{\eta}}{\partial z_{j}}u_{jJ}\|^{2}\lesssim\|\alpha_{\eta}(L_{u}^{J})\|^{2}+\|\alpha(L_{u}^{J})\|^{2}+ C_{\eta}\|\rho u\|^{2}$. Combining \eqref{KMH2}--\eqref{est5} and collecting the error terms, we arrive at
\begin{multline}\label{est6}
\sideset{}{'}\sum_{J} \int_{\Omega}\sum_{j,k}\frac{\partial^{2}(-h_{\eta})}{\partial z_{j}\partial\overline{z_{k}}}u_{jJ}\overline{u_{kJ}} \\
\lesssim  \sideset{}{'}\sum_{J}\int_{\Omega}\sum_{j,k}\frac{\partial^{2}(-h_{\eta})}{\partial z_{j}\partial\overline{z_{k}}}u_{jJ}\overline{u_{kJ}}e^{(h_{\eta}-\Re\widetilde{h_{\eta}})} + C_{\eta}\|\rho u\|^{2}\;\;\;\;\;\;\;\;\;\;\;\;\;\;\;\;\;\;\;\;  \\
\lesssim \|\overline{\partial}\widetilde{h_{\eta}}\wedge u\|^{2}+\|\overline{\partial}u\|^{2}+\|\overline{\partial}^{*}u\|^{2}\;\;\;\;\;\;\;\;\;\;\;\;\;\;\;\;\;\;\;\;\;\;\; \;\;\;\;\\
 \;\;\;\;\;\;\;\;\;\;\;\;\;\;\;\;\;\;\;\;+\sideset{}{'}\sum_{J}\|\alpha_{\eta}(L_{u}^{J})\|^{2}+C_{\eta}\left(\|\rho u\|^{2}+\|+\|u_{N}\|^{2}\right)\;.
\end{multline}
In the first inequality, we have used that $h_{\eta}$ and $\widetilde{h_{\eta}}$ agree on the boundary; in addition, we have estimated $\|\alpha(L_{u}^{J})\|^{2}$ by $\|u\|^{2}\lesssim \|\overline{\partial}u\|^{2}+\|\overline{\partial}^{*}u\|^{2}$. Finally, using \eqref{alpha6} and summing over $J$ to estimate $\sideset{}{'}\sum_{J}\|\alpha_{\eta}(L_{u}^{J})\|^{2}$, and choosing $(1-\eta)$ so small that the Hessian term can be absorbed into the left hand side of \eqref{est6} gives
\begin{multline}\label{est7}
  \sideset{}{'}\sum_{J}\int_{\Omega}\sum_{j,k}\frac{\partial^{2}(-h_{\eta})}{\partial z_{j}\partial\overline{z_{k}}}u_{jJ}\overline{u_{kJ}}\, \\
  \;\;\;\;\;\;\;\;\;\;\;\;\;\;\;\;\;\;\lesssim \,\|\overline{\partial}\widetilde{h_{\eta}}\wedge u\|^{2}+\|\overline{\partial}u\|^{2}+\|\overline{\partial}^{*}u\|^{2} 
 +C_{\eta}\left(\|\rho u\|^{2}+\|u_{N}\|^{2}+\|u\|_{-1}^{2}\right)\;\;\;\;\;\;\;\;\;\;\;\;\;\;\;\;\;\;\;\;\\
 \lesssim \|\overline{\partial}\widetilde{h_{\eta}}\wedge u\|^{2}+\|\overline{\partial}u\|^{2}+
 \|\overline{\partial}^{*}u\|^{2}+C_{\eta}\|u\|_{-1}^{2}\;.\;\;\;\;\;\;\;\;\;\;
 \end{multline}
In the last inequality, we have used that $\rho$ is a subelliptic multiplier, and the subelliptic estimate for the normal component of a form (\cite{Straube10a}, Lemma 2.12).

\smallskip

To estimate $\|\overline{\partial}\widetilde{h_{\eta}}\wedge u\|^{2}$, we temporarily switch notation, and also express forms in special boundary charts; let $v=\sideset{}{'}\sum_{|K|=q}v_{K}\;\overline{\omega_{K}}$ (supported in a special boundary chart). Then
\begin{equation}\label{est7a}
 |\overline{\partial}\widetilde{h_{\eta}}\wedge v|^{2}\leq|\overline{\partial}\widetilde{h_{\eta}}|^{2}|v|^{2} \lesssim
 \sideset{}{'}\sum_{j<n,K}|(\overline{L_{j}}\widetilde{h_{\eta}})v_{K}|^{2} + \sideset{}{'}\sum_{K}|(\overline{L_{n}}\widetilde{h_{\eta}})v_{K}|^{2}\;.
\end{equation}
Because $\overline{L_{n}}\widetilde{h_{\eta}}=0$ on the boundary, the last term is $O_{\eta}(\rho|v|^{2})$, and so will be under control. For the others, note that $|\overline{L_{j}}\widetilde{h_{\eta}}|^{2}=|\overline{L_{j}}h_{\eta}|^{2}=|\overline{L_{j}h_{\eta}}|^{2}=|L_{j}h_{\eta}|^{2}$ on the boundary when $j<n$. Therefore, when $j<n$,
\begin{multline}\label{est8}
 |(\overline{L_{j}}\widetilde{h_{\eta}})v_{K}|^{2} \lesssim |(L_{j}h_{\eta})v_{K}|^{2}+C_{\eta}|\rho v|^{2}  \\
 =|\partial h_{\eta}(v_{K}L_{j})|^{2}+C_{\eta}|\rho v|^{2}=|(\alpha_{\eta}-\alpha)(v_{K}L_{j})|^{2}+C_{\eta}|\rho v|^{2}  \\
 \lesssim |\alpha_{\eta}(v_{K}L_{j})|^{2}+|v|^{2}+C_{\eta}|\rho v|^{2}\;.
\end{multline}
The error that results from the fact that $\partial h_{\eta}=\alpha_{\eta}-\alpha$ only at the boundary is also covered by $C_{\eta}|\rho v|^{2}$. Integrating over $\Omega$, invoking the case $q=1$ of \eqref{esta} and \eqref{alpha6} with $u=v_{K}\overline{\omega_{j}}$ (so $L_{u}=v_{K}L_{j}$: for a $(0,1)$--form $u$, $L_{u}$ is characterized by $\overline{u}(L_{u})=|u|^{2}$ and $\{\overline{u}\}^{\perp}$ annihilates $L_{u}$), and then \eqref{est7}, we get 
\begin{multline}\label{est9}
 \int_{\Omega} |\overline{L_{j}}\widetilde{h_{\eta}}v_{K}|^{2} \lesssim \int_{\Omega}|\alpha_{\eta}(v_{K}L_{j})|^{2}+\|v\|^{2}+C_{\eta}\|\rho v\|^{2} \\
 \lesssim (1-\eta)\left(\|\overline{\partial}\widetilde{h_{\eta}}\wedge(v_{K}\overline{\omega_{j}})\|^{2}+\|\overline{\partial}(v_{K}\overline{\omega_{j}})\|^{2}+\|\overline{\partial}^{*}(v_{K}\overline{\omega_{j}})\|^{2}\right)  \\
 +\|v\|^{2}+C_{\eta}\left(\|\rho v\|^{2}+\|v\|_{-1}^{2}\right)\;.
\end{multline}
Because $q_{0}\leq q$, and in view of Lemma \ref{percup}, the comparable $q_{0}$--sums assumption on the Levi eigenvalues in Theorem \ref{main} implies the maximal estimate \eqref{max} for $(0,q)$--forms. Accordingly 
\begin{equation}\label{est10}
 \|\overline{\partial}(v_{K}\overline{\omega_{j}})\|^{2}+\|\overline{\partial}^{*}(v_{K}\overline{\omega_{j}})\|^{2} \lesssim \sum_{s}\|\overline{L_{s}}v_{K}\|^{2}+\|L_{j}v_{K}\|^{2}+\|v_{K}\|^{2} \lesssim \|\overline{\partial}v\|^{2}+\|\overline{\partial}^{*}v\|^{2}\;,
\end{equation}
where the second inequality results from the maximal estimates applied to the $(0,q)$--form $v$ (note that $j<n$). Starting from \eqref{est7a}, inserting \eqref{est10} into \eqref{est9}, and using  $|\overline{\partial}\widetilde{h_{\eta}}\wedge (v_{K}\overline{\omega_{j}})|^{2}\lesssim |\overline{\partial}\widetilde{h_{\eta}}|^{2}|v|^{2}$, we have
\begin{multline}\label{est11}
 \int_{\Omega}|\overline{\partial}\widetilde{h_{\eta}}|^{2}|v|^{2} \leq
 \sideset{}{'}\sum_{j<n,K}\int_{\Omega}|(\overline{L_{j}}\widetilde{h_{\eta}})v_{K}|^{2}+C_{\eta}\|\rho v\|^{2}\\
 \lesssim (1-\eta)\left(\int_{\Omega}|\overline{\partial}\widetilde{h_{\eta}}|^{2}|v|^{2}+\|\overline{\partial}v\|^{2}+\|\overline{\partial}^{*}v\|^{2}\right)  
 +\|v\|^{2}+C_{\eta}\|v\|_{-1}^{2}\;.
\end{multline}
 Now we choose $(1-\eta)$ small enough so that the term containing $\overline{\partial}\widetilde{h_{\eta}}$ in \eqref{est11} can be absorbed into the left hand side. Then
\begin{multline}\label{est7b}
 \|\overline{\partial}\widetilde{h_{\eta}}\wedge v\|^{2} \leq \int_{\Omega}|\overline{\partial}\widetilde{h_{\eta}}|^{2}|v|^{2}\lesssim (1-\eta)\left(\|\overline{\partial}v\|^{2}+\|\overline{\partial}^{*}v\|^{2}\right) 
 +\|v\|^{2}+C_{\eta}\|v\|_{-1}^{2}\;.
\end{multline}
Via a partition of unity, \eqref{est7b} carries over as usual to when $v$ is not supported in a special boundary chart; the compactly supported term that arises is also dominated by the right hand side of \eqref{est7b} (by interior elliptic regularity). Insert this estimate into \eqref{est7}. The result is the first inequality below
\begin{equation}\label{est12}
 \sideset{}{'}\sum_{J}\int_{\Omega}\sum_{j,k}\frac{\partial^{2}(-h_{\eta})}{\partial z_{j}\partial\overline{z_{k}}}u_{jJ}\overline{u_{kJ}}\,\lesssim\,\|\overline{\partial}u\|^{2}+\|\overline{\partial}^{*}u\|^{2}+C_{\eta}\|u\|_{-1}^{2} \;.
\end{equation}
 Inserting \eqref{est12} into \eqref{alpha6} gives \eqref{integralest}. The proof of Proposition \ref{integrated} is now complete.
 \end{proof}

\section{Proof of Theorem \ref{main}}\label{proof}

\begin{proof}[Proof of Theorem \ref{main}]
It suffices to prove the statement for the $\overline{\partial}$--Neumann operator (\cite{BoasStraube90}; \cite{Straube10a}, Theorem 5.5). The proof follows the first part of the proof of Theorem 1 in section 4 of \cite{Straube05}, but then uses the commutator formulas from \cite{HarringtonLiu20} (i.e. Lemmas \ref{dbar} and \ref{dbar*} above) instead of Lemmas 4 and 5 in \cite{Straube05}. 

\smallskip

We use a downward induction on the degree $q$. In the top degree $q=n$, the $\overline{\partial}$--Neumann boundary conditions reduce to Dirichlet boundary conditions, and $N_{n}$ gains two derivatives in Sobolev norms. So to show that $N_{q_{0}}$ satisfies Sobolev estimates, it suffices to show: if $N_{q+1}$ satisfies Sobolev estimates, and $q\geq q_{0}$, then so does $N_{q}$. So fix such a $q$. The induction assumption will be invoked to conclude that the Bergman projection $P_{q}$ on $(0,q)$--forms satisfies Sobolev estimates (\cite{Straube10a}, Theorem 5.5). 

\smallskip

As usual, the arguments will involve absorbing terms, and so one has to know that these terms are finite. In order to insure that, we work with the regularized operators $N_{\delta,q}$ that result from elliptic regularization. That is, $N_{\delta,q}$ is the inverse of the operator $\Box_{\delta,q}$ associated with the quadratic form $Q_{\delta,q}(u,u)=\|\overline{\partial}u\|^{2}+\|\overline{\partial}^{*}u\|^{2}+\delta\|\nabla u\|^{2}$, with form domain $W^{1}_{(0,q)}(\Omega)\cap dom(\overline{\partial}^{*})$, where $\nabla$ denotes the vector of all first order derivatives of all coefficients of $u$. $N_{\delta,q}$ maps $L^{2}_{(0,q)}(\Omega)$ continuously into the form domain, endowed with the norm $Q_{\delta,q}(u,u)^{1/2}$. We will prove estimates with constants that are uniform in $\delta$; letting $\delta\rightarrow 0$ then gives the desired estimates for $N_{q}$ (\cite{ChenShaw01}, pages 102--103 and \cite{Straube10a}, section 3.3).

\smallskip
 
So $q$ is now fixed, $q_{0}\leq q$, and $N_{q+1}$, hence $P_{q}$, satisfy Sobolev estimates. We want to prove Sobolev estimates for $N_{\delta,q}$, with constants uniform in $\delta$ ($\delta$ small). Because $P_{q}$ is regular in Sobolev norms, Lemma 7 in \cite{Straube05} (see also \cite{Straube25}, Theorem 1.1)  says that for all $s>0$, there is a constant $C_{s}$ independent of $\delta$ such that we have the estimate 
\begin{equation}\label{est34}
 \|N_{\delta,q}u\|_{s} \leq C_{s}(\|\overline{\partial}N_{\delta,q}u\|_{s} + \|\overline{\partial}^{*}N_{\delta,q}u\|_{s})\;.
\end{equation}
Therefore, it suffices to prove estimates for the right hand side of \eqref{est34} (with constants independent of $\delta$). In order to bring $Q_{\delta,q}$ into play, we prove estimates for $\overline{\partial}N_{\delta,q}$, $\overline{\partial}^{*}N_{\delta,q}$, and $\delta^{1/2}\nabla N_{\delta,q}$ as in \cite{Straube05}. We do this by induction on the Sobolev index $k$.
 
\smallskip

Because $N_{\delta,q}$ maps $L^{2}_{(0,q)}(\Omega)$ continuously into the form domain of $Q_{\delta,q}$, with norm less than or equal to $1$, the following inductive assumption holds for $l=0$: there are $\eta\in (0,1)$, a constant $C$ and $\delta_{0}>0$  such that
\begin{equation}\label{est30}
 \|\overline{\partial}N_{\delta,q}u\|_{l,2h_{\eta}}^{2}+\|\overline{\partial}^{*}N_{\delta,q}u\|_{l,2h_{\eta}}^{2}+\delta\|\nabla N_{\delta,q}u\|_{l,2h_{\eta}}^{2} \leq C\|u\|_{l}^{2}\;,\;0<\delta\leq\delta_{0}\;.
\end{equation}
Indeed, take any $\eta\in (0,1)$, then choose $C$ big enough and $\delta>0$ arbitrary. We now assume this induction assumption holds for $0\leq l\leq (k-1)$ and show that it then holds for $l=k$. The induction assumption \eqref{est30} corresponds to (31) in \cite{Straube05}, with the modification that the norms on the left hand side are weighted, and that there is no term $\delta^{2}\|u\|_{l+1}^{2}$. It turns out that with Lemmas \ref{dbar} and \ref{dbar*} for the commutators with $\overline{\partial}$ and $\overline{\partial}^{*}$ (rather than Lemmas 4 and 5 from \cite{Straube05}), this term is not needed for the induction to run.

\smallskip

For $\eta\in (0,1)$, set $X_{\eta}=e^{\rho_{\eta} g_{\eta}}L_{n}^{\rho_{\eta}}$, where $g_{\eta}$ is a smooth function that agrees on the boundary with $\alpha_{\eta}(\overline{L_{n}^{\rho_{\eta}}})$. Because $\alpha_{\eta}(T^{\rho_{\eta}})=0$, $\alpha_{\eta}\big(\overline{L_{n}^{\rho_{\eta}}}\big)$ is real, and we may take $g_{\eta}$ to be real valued. This modification ia analogous to replacing $h_{\eta}$ by $\widetilde{h_{\eta}}$ in section \ref{index-form}. Then $(X_{\eta}-\overline{X_{\eta}})$ is tangential at the boundary. The reason for the modification is that the terms where $j=n$ in the commutator \eqref{est41} below will then vanish on the boundary. This fact is needed in the argument. Being able to make this choice is a manifestation of the principle that `commutator conditions in the normal (and any strictly pseudoconvex) direction come for free' (\cite{Straube10a}, Section 5.7, \cite{BoasStraube91a}, proof of the lemma). Note that because $g_{\eta}$ is only prescribed on the boundary, we may assume that $|\rho_{\eta} g_{\eta}|\leq 1/2k$. As usual, we need tangential differential operators to preserve the domain of $\overline{\partial}^{*}$, and so we let them act in special boundary charts (\cite{Straube10a}, section 2.2). The error this introduces is one order lower than the operator, and as a result, our estimates are not affected.

\smallskip

Using Lemma \ref{benign} for barred derivatives or derivatives that are complex tangential, we have
\begin{multline}\label{est35}
 \|\overline{\partial}^{*}N_{\delta,q}u\|_{k,2h_{\eta}}^{2} \lesssim \|(e^{-h_{\eta}}(L_{n}^{\rho}-\overline{L_{n}^{\rho}}))^{k}\overline{\partial}^{*}N_{\delta,q}u\|^{2}  \\
 +C_{\eta}(\|\overline{\partial}\overline{\partial}^{*}N_{\delta,q}u\|_{k-1}^{2}+\|\overline{\partial}^{*}N_{\delta,q}u\|_{k-1}\|\overline{\partial}^{*}N_{\delta,q}u\|_{k}+\|u\|_{k}^{2})\;.
\end{multline}
The $\|u\|_{k}^{2}$ term is needed because of the compactly supported term from the partition of unity used for the special boundary charts; this term is not covered by Lemma \ref{dbar}. The estimate follows from interior elliptic regularity of $\Box_{\delta,q}$, with constants that are uniform in $\delta$. This uniformity holds because for $u\in dom(\Box_{\delta,q})$, $\Box_{\delta,q}u=-(1/4+\delta)\Delta u$, where $\Delta$ acts coefficient wise (see \cite{Straube10a}, formula (3.22)). We have used that when there is a barred derivative, or a complex tangential derivative, we can always commute it so that it acts first. Then there is no issue with the requirement that a form must be in the domain of $\overline{\partial}^{*}$ in order for Lemma \ref{benign} to apply. The error term this makes is of order at most $(k-1)$. Modulo terms controlled by $\|\overline{\partial}^{*}N_{\delta,q}u\|_{k-1}^{2}$, we have $\|(e^{-h_{\eta}}(L_{n}^{\rho}-\overline{L_{n}^{\rho}}))^{k}\overline{\partial}^{*}N_{\delta,q}u\|^{2} \lesssim\|e^{-kh_{\eta}}(L_{n}^{\rho}-\overline{L_{n}^{\rho}})^{k}\overline{\partial}^{*}N_{\delta,q}u\|^{2}\leq e\|(X_{\eta}-\overline{X_{\eta}})^{k}\overline{\partial}^{*}N_{\delta,q}u\|^{2}$ (since $2k|\rho g_{\eta}|\leq 1$). Upon also estimating the second to last term in \eqref{est35} via a s.c.--l.c argument and absorbing the term $s.c\|\overline{\partial}^{*}N_{\delta,q}u\|_{k,2h_{\eta}}^{2}$ into the left hand side of \eqref{est35}, we thus arrive at
\begin{equation}\label{est36}
 \|\overline{\partial}^{*}N_{\delta,q}u\|_{k,2h_{\eta}}^{2} \lesssim \|(X_{\eta}-\overline{X_{\eta}})^{k}\overline{\partial}^{*}N_{\delta,q}u\|^{2} 
 +C_{\eta}(\|\overline{\partial}\overline{\partial}^{*}N_{\delta,q}u\|_{k-1}^{2}+\|\overline{\partial}^{*}N_{\delta,q}u\|_{k-1}^{2}+\|u\|_{k}^{2})\;.
\end{equation}
This estimate corresponds to estimate (32) in \cite{Straube05}, with $\eta$ in the role of $\varepsilon$. Note that uniform boundedness of $h_{\eta}$, used in \cite{Straube05} for estimate (32), is not needed here because we use the weighted norm on the left hand side.

\smallskip

Applying the same reasoning to $\|\overline{\partial}N_{\delta,q}u\|_{k,2h_{\eta}}^{2}$ takes a little more care, because $\overline{\partial}N_{\delta,q}u$ need not be in the domain of $\overline{\partial}^{*}$ (the free boundary condition imposed by $Q_{\delta}$ is different from that imposed by $Q$). The necessary modification is explained in detail in \cite{Straube05}, to which we refer the reader. The result is the following estimate:
\begin{multline}\label{est37}
 \|\overline{\partial}N_{\delta,q}u\|_{k,2h_{\eta}}^{2} \lesssim \|(X_{\eta}-\overline{X_{\eta}})^{k}\overline{\partial}N_{\delta,q}u\|^{2}   \\
 + C_{\eta}(\|\vartheta\overline{\partial}N_{\delta,q}u\|_{k-1}^{2}+\|\overline{\partial}N_{\delta,q}u\|_{k-1}^{2}+\|\delta(\partial/\partial\nu)N_{\delta,q}u\wedge\overline{\omega_{n}}\|_{k}^{2}+\|u\|_{k}^{2})\;.
\end{multline}
Here, $\vartheta$ is the formal adjoint of $\overline{\partial}$, $(\partial/\partial\nu)$ denotes the normal derivative acting coefficient wise on forms, and $\omega_{n}$ is the form dual to the unit normal $L_{n}$. The passage from $\|(e^{-h_{\eta}}(L_{n}^{\rho}-\overline{L_{n}^{\rho}}))^{k}\overline{\partial}N_{\delta,q}u\|^{2}$ to $\|(X_{\eta}-\overline{X_{\eta}})^{k}\overline{\partial}N_{\delta,q}u\|^{2}$ is as in \eqref{est35} -- \eqref{est36}.

\smallskip

The argument for $\delta\|\nabla N_{\delta,q}u\|_{k,2h_{\eta}}^{2}$ is slightly different from that given in \cite{Straube05} (the argument there uses uniform bounds on the functions $h_{\eta}$). At issue is the $\partial/\partial\nu$ term in $\nabla$, which need not be in the domain of $\overline{\partial}^{*}$. However, because the boundary is not characteristic for $\overline{\partial}\oplus\vartheta$ (\cite{Straube10a}, Lemma 2.2), we can write $\nabla N_{\delta,q}u = \nabla_{T}N_{\delta,q}u$ plus a linear combination of coefficients of $\overline{\partial}N_{\delta,q}u$, $\overline{\partial}^{*}N_{\delta,q}u$, and $N_{\delta,q}u$ itself. Here, $\nabla_{T}$ stands for tangential derivatives. Applying the same reasoning as for \eqref{est36} to $\nabla_{T}N_{\delta,q}u$ gives
\begin{multline}\label{est38}
 \delta\|\nabla N_{\delta,q}u\|_{k,2h_{\eta}}^{2} \lesssim \delta\|(X_{\eta}-\overline{X_{\eta}})^{k}\nabla N_{\delta,q}u\|^{2}  \\
 + C_{\eta}\delta(\|\overline{\partial}N_{\delta,q}u\|_{k}^{2}+\|\overline{\partial}^{*}N_{\delta,q}u\|_{k}^{2} + \|N_{\delta,q}u\|_{k}\;\|N_{\delta,q}u\|_{k+1})\;.
\end{multline}
We have used that $\|(X_{\eta}-\overline{X_{\eta}})^{k}\nabla_{T} N_{\delta,q}u\|^{2} \leq \|(X_{\eta}-\overline{X_{\eta}})^{k}\nabla N_{\delta,q}u\|^{2}$.

\smallskip

Estimates \eqref{est36},\eqref{est37}, and \eqref{est38} correspond to estimates (32), (33), and (34) in \cite{Straube05}. It is shown there in (31)--(39), and the paragraph immediately following (39), that completing the induction step reduces to estimating the following three inner products:
\begin{equation}\label{eq31}
 \left((X_{\eta}-\overline{X_{\eta}})^{k}\overline{\partial}N_{\delta,q}u,[\overline{\partial},X_{\eta}-\overline{X_{\eta}}](X_{\eta}-\overline{X_{\eta}})^{k-1}N_{\delta,q}u\right)\;,
\end{equation}
\begin{equation}\label{eq32}
 \left((X_{\eta}-\overline{X_{\eta}})^{k}\overline{\partial}^{*}N_{\delta,q}u,[\overline{\partial}^{*},X_{\eta}-\overline{X_{\eta}}](X_{\eta}-\overline{X_{\eta}})^{k-1}N_{\delta,q}u\right)\;,
\end{equation}
 and
 \begin{equation}\label{eq33}
  \delta\left((X_{\eta}-\overline{X_{\eta}})^{k}\nabla N_{\delta,q}u,[\nabla,X_{\eta}-\overline{X_{\eta}}](X_{\eta}-\overline{X_{\eta}})^{k-1}N_{\delta,q}u\right)\;.
 \end{equation}
More precisely, it is shown in \cite{Straube05} that for $\eta$ given, there is $\delta_{0}(\eta)$ such that all the error terms in \eqref{est36}--\eqref{est38} are acceptable for \eqref{est30} (for $l=k$), or can be absorbed into the left hand side of \eqref{est30} when $\delta\leq \delta_{0}(\eta)$. We will use `acceptable' somewhat loosely for terms in an estimate that are either bounded appropriately by the right hand side, or can be absorbed into the left hand side. Note that upon adding \eqref{eq31} through \eqref{eq33}, using Cauchy--Schwarz on each inner product, followed by a s.c.--l.c. argument, the squares of the norms of the left hand sides in \eqref{eq31}--\eqref{eq33} can be absorbed into the first term on the right hand side of the sum of \eqref{est36}--\eqref{est38}. Therefore, in order to estimate the sum of the left hand sides of \eqref{est36}--\eqref{est38}, we only have to estimate 
 \begin{multline}\label{39}
  \|[\overline{\partial},X_{\eta}-\overline{X_{\eta}}](X_{\eta}-\overline{X_{\eta}})^{k-1}N_{\delta,q}u\|^{2}+\|[\overline{\partial}^{*},X_{\eta}-\overline{X_{\eta}}](X_{\eta}-\overline{X_{\eta}})^{k-1}N_{\delta,q}u\|^{2}  \\
  + \delta\|[\nabla,X_{\eta}-\overline{X_{\eta}}](X_{\eta}-\overline{X_{\eta}})^{k-1}N_{\delta,q}u\|^{2}\;.
 \end{multline}
 This expression differs from 
 \begin{equation}\label{39a}
 \|[\overline{\partial},(X_{\eta}-\overline{X_{\eta}})^{k}]N_{\delta,q}u\|^{2}+\|[\overline{\partial}^{*},(X_{\eta}-\overline{X_{\eta}})^{k}]N_{\delta,q}u\|^{2}  
  + \delta\|[\nabla,(X_{\eta}-\overline{X_{\eta}})^{k}]N_{\delta,q}u\|^{2}
 \end{equation}
by terms that are of order $(k-1)$. This follows from the formula for commutators with powers of an operator cited earlier: $[A,T^{k}] = \sum_{j=1}^{k}\binom{k}{j}[\cdots[[A,T],T]\cdots]T^{k-j}$ ($j$--fold, note that these iterated commutators are of order one). In view of \eqref{est34} and the induction assumption, these terms are acceptable. We will now estimate \eqref{39a}.

\smallskip

For the commutators with $\overline{\partial}$ and $\overline{\partial}^{*}$ we use Lemmas \ref{dbar} and \ref{dbar*}, with $\rho=\rho_{\eta}$, $h = -\rho_{\eta} g_{\eta}$ and $v=N_{\delta,q}u$. It suffices to estimate the terms in \eqref{39a} with $N_{\delta,q}u$ replaced by $\chi N_{\delta,q}u$, where $\chi\in C^{\infty}(\mathbb{C}^{n})$ is supported in a boundary chart. Via a suitable partition of unity (independent of $\eta$), these estimates will then be summed to obtain global estimates. Lemma \ref{dbar} gives
 \begin{multline}\label{est41}
 \|[\overline{\partial},(X_{\eta}-\overline{X_{\eta}})^{k}]\chi N_{\delta,q}u\|^{2}  \\
 \lesssim \sideset{}{'}\sum_{J}\sum_{j\notin J, j<n}\int_{\Omega}\big|\overline{L_{j}}(\rho_{\eta} g_{\eta})-\alpha_{\eta}(\overline{L_{j}})\big|^{2}|(X_{\eta}-\overline{X_{\eta}})^{k}(\chi N_{\delta,q}u)_{J}|^{2}\;\;\;\;\;\;\;\;\;\;\;\; \\
 +\sideset{}{'}\sum_{n\notin J}\int_{\Omega}\big|\overline{L_{n}^{\rho_{\eta}}}(\rho_{\eta} g_{\eta})-\alpha_{\eta}(\overline{L_{n}^{\rho_{\eta}}})\big|^{2}|(X_{\eta}-\overline{X_{\eta}})^{k}(\chi N_{\delta,q}u)_{J}|^{2}\;\;\;\;\;\;\; \\
 \;\;\;\;+C_{\eta}\,(\|\overline{\partial}N_{\delta,q}u\|_{k-1}^{2}+\|\overline{\partial}^{*}N_{\delta,q}u\|_{k-1}^{2}+\|N_{\delta,q}u\|_{k-1}\|N_{\delta,q}u\|_{k}+\|u\|_{k-2}^{2})\;.
 \end{multline}
We have used \eqref{est34} to estimate the terms of order at most $(k-1)$. The sum is only over $j\notin J$ because for $j\in J$, the term $\overline{\omega_{j}}\wedge\overline{\omega_{J}}$ vanishes. In the error terms, we omit the factor $\chi$ and use that the commutators of $\chi$ with $\overline{\partial}$ and $\overline{\partial}^{*}$ are of order zero. Because there are only finitely many $\chi$s, we may keep $C_{\eta}$ independent of $\chi$. The term $\|u\|_{k-2}^{2}$ again results from the compactly supported term that arises from letting $(X_{\eta}-\overline{X_{\eta}})$ act in special boundary charts; as above, the estimate follows from interior elliptic regularity of $\Box_{\delta,q}$, with constants that are uniform in $\delta$. The order $(k-1)$ error terms are acceptable by the induction hypothesis, and using \eqref{est34} for $\|N_{\delta,q}u\|_{k-1}\|N_{\delta,q}u\|_{k}$ shows that this term can be split into an absorbable term and a lower order term. From Lemma \ref{dbar*}, we similarly obtain (but not taking the absolute values all the way inside the sums)
\begin{multline}\label{est42}
 \|[\overline{\partial}^{*},(X_{\eta}-\overline{X_{\eta}})^{k}]\chi N_{\delta,q}u\|^{2}  \\
 \lesssim \sideset{}{'}\sum_{S}\int_{\Omega}\big|\sum_{j}\big(L_{j}(\rho_{\eta} g_{\eta})-\alpha_{\eta}(L_{j})\big)(X_{\eta}-\overline{X_{\eta}})^{k}(\chi N_{\delta,q}u)_{jS}\big|^{2}\;\;\;\;\;\;\;\;\;\;\;\;\;\;\;\;\;\;\;\;\;\;\;\;\;\;\;\;\;\;\;\;\;\;\;\;\; \\
 \;\;\;\;\;\;\;\;\;\;\;\;\;\;+ C_{\eta}\,\left(\|\overline{\partial}N_{\delta,q}u\|_{k-1}^{2}+\|\overline{\partial}^{*}N_{\delta,q}u\|_{k-1}^{2}+\|N_{\delta,q}u\|_{k-1}\|N_{\delta,q}u\|_{k}+\|u\|_{k-2}^{2}\right)\;.
\end{multline} 
The first three error terms in the last line are the same as in \eqref{est41} and so are acceptable, as is the last. For the fourth one, we use the subelliptic estimate $\|(N_{\delta,q})_{N}\|_{k}^{2} \lesssim \|\overline{\partial}N_{\delta,q}u\|_{k-1}^{2}+\|\overline{\partial}^{*}N_{\delta,q}u\|_{k-1}^{2}+\|N_{\delta,q}u\|_{k-1}^{2}$ (\cite{Straube10a}, estimate (2.96)). Using \eqref{est34} again and the induction hypothesis shows that this term is also acceptable.
 
For the commutator with $\nabla$ in \eqref{39a}, we do not need the cutoff functions. Note that the term is of order $k$, so that the term is estimated by $\delta C_{\eta}\|N_{\delta,q}u\|_{k}^{2}\lesssim \delta C_{\eta}(\|\overline{\partial}N_{\delta,q}u\|_{k,2h_{\eta}}^{2}+\|\overline{\partial}^{*}N_{\delta,q}u\|_{k,2h_{\eta}}^{2})$ (again by \eqref{est34}, the weight in the norms just changes the constant $C_{\eta}$). If we choose $\delta_{0}(\eta)$ small enough, this term can therefore be absorbed when $\delta\leq \delta_{0}(\eta)$. We are therefore left with estimating only the main terms on the right hand sides of \eqref{est41} and \eqref{est42}, respectively.

For $j<n$, the contributions in \eqref{est41} and \eqref{est42} coming from $\overline{L_{j}}(\rho_{\eta} g_{\eta})$ and $L_{j}(\rho_{\eta} g_{\eta})$, respectively, are $\mathcal{O}_{\eta}(\|\rho(X_{\eta}-\overline{X_{\eta}})^{k} \chi N_{\delta,q}u\|_{k}$ and are again acceptable ($\rho_{\eta}$ is a subelliptic multiplier and \eqref{est34}). For the term in \eqref{est41} where $j=n$, we have that $\big|\overline{L_{n}^{\rho_{\eta}}}(\rho_{\eta} g_{\eta})-\alpha_{\eta}(\overline{L_{n}^{\rho_{\eta}}})|^{2}=0$ on the boundary, due to the choice of $g_{\eta}$ (note that $\overline{L_{n}^{\rho_{\eta}}}(\rho_{\eta} g_{\eta})=g_{\eta}$ on the boundary). The contribution from this term is thus $\mathcal{O}_{\eta}(\|\rho(X_{\eta}-\overline{X_{\eta}})^{k}(\chi N_{\delta,q}u)\|^{2})$. Again because $\rho_{\eta}$ is a subelliptic multiplier and \eqref{est34}, this term is acceptable. In \eqref{est42}, $L_{n}(\rho_{\eta}g_{\eta})$ multiplies a coefficient of the normal component of $(X_{\eta}-\overline{X_{\eta}})\chi N_{\delta,q}u$. Taking the normal component acts like a subelliptic multiplier (\cite{Straube10a}, estimate (2.96)), so this term is also benign.

We now discuss the remaining terms in the second lines of \eqref{est41} and \eqref{est42}, respectively. We want to apply Proposition \ref{integrated}. In order to do so, observe that it does not matter whether we form the vector fields $L^{J}_{u}$ in Euclidean coordinates or with coordinates in a special boundary frame: one set is obtained from the other via linear combinations (with smooth coefficients) when the from $u$ is fixed. Therefore, the validity of \eqref{integralest} is not affected. We start with \eqref{est41}; recall that for $j<n$, the terms containing $g_{\eta}$ have been taken care of, as has the term where $j=n$. Also, if $n\in J$, we can use the subelliptic estimate for the normal component of a form once more; these terms are thus acceptable. Because $\alpha_{\eta}$ is real, $|\alpha_{\eta}(\overline{L_{j}})|^{2} = |\alpha_{\eta}(L_{j})|^{2}$, and we only have to estimate $\int_{\Omega}|\alpha_{\eta}(L_{j})|^{2}|(X_{\eta}-\overline{X_{\eta}})^{k}(\chi N_{\delta,q}u)_{J}|^{2}=\int_{\Omega}|\alpha_{\eta}((X_{\eta}-\overline{X_{\eta}})^{k}(\chi N_{\delta,q}u)_{J}L_{j})|^{2}$. Because $j\notin J$, $L_{\overline{\omega_{j}}\wedge\overline{\omega_{J}}}^{J} = \sum_{s=1}^{n}(\overline{\omega_{j}}\wedge\overline{\omega_{J}})_{sJ}L_{s} = L_{j}$ (the sum is only over $s\notin J$), and so $L^{J}_{\big((X_{\eta}-\overline{X_{\eta}})^{k}\chi N_{\delta,q}u\big)\overline{\omega_{j}}\wedge\overline{\omega_{J}}}=\big((X_{\eta}-\overline{X_{\eta}})^{k}\chi N_{\delta,q}u\big)_{J}L_{j}$. Proposition \ref{integrated} applies to the $(0,q+1)$--form $\big((X_{\eta}-\overline{X_{\eta}})^{k}\chi N_{\delta,q}u\big)_{J}(\overline{\omega_{j}}\wedge\overline{\omega_{J}})$; note that because $j<n$ and $n\notin J$, this form is in the domain of $\overline{\partial}^{*}$. The resulting estimate is
\begin{multline}\label{est43}
 \sideset{}{'}\sum_{J}\sum_{j\notin J,j<n}\int_{\Omega}|\alpha_{\eta}(L_{j})|^{2}|(X_{\eta}-\overline{X_{\eta}})^{k}(\chi N_{\delta,q}u)_{J}|^{2}  \\
 \lesssim (1-\eta)\sideset{}{'}\sum_{J}\sum_{j\notin J,j<n}\left(\|\overline{\partial}\big(((X_{\eta}-\overline{X_{\eta}})^{k}(\chi N_{\delta,q}u)_{J})(\overline{\omega_{j}}\wedge\overline{\omega_{J}})\big)\|^{2}\right.\\
 \;\;\;\;\;\;\;\;\;\;\;\;\;\;\;\;\;\;\;\;\;\;\;\;\;\;\;\;\;+\left.\|\overline{\partial}^{*}\big(((X_{\eta}-\overline{X_{\eta}})^{k}(\chi N_{\delta,q}u)_{J})(\overline{\omega_{j}}\wedge\overline{\omega_{J}})\big)\|^{2}\right) \\
 +C_{\eta}\|(X_{\eta}-\overline{X_{\eta}})^{k}N_{\delta,q}u\|_{-1}^{2}\;.\;\;\;
\end{multline}
We have again omitted the factor $\chi$ from the error term; commuting $\chi$ to the front gives an error term that is of the order of $C_{\eta}\|N_{\delta,q}u\|_{k-1}$. These terms are acceptable. Once $\chi$ is first, the terms are bounded by the norms over all of $\Omega$. The $\overline{\partial}$ and $\overline{\partial}^{*}$ terms in the second and third lines of \eqref{est43} are dominated (uniformly in $\eta$ and $\delta$) by 
\begin{multline}\label{est44}
 \sum_{s=1}^{n}\|\overline{L_{s}}((X_{\eta}-\overline{X_{\eta}})^{k}(\chi N_{\delta,q}u)_{J})\|^{2}+\sum_{j<n}\|L_{j}((X_{\eta}-\overline{X_{\eta}})^{k}(\chi N_{\delta,q}u)_{J})\|^{2}+\|(X_{\eta}-\overline{X_{\eta}})^{k}(\chi N_{\delta,q}u)\|^{2}  \\
 \lesssim \|\overline{\partial}((X_{\eta}-\overline{X_{\eta}})^{k}(\chi N_{\delta,q}u))\|^{2}+\|\overline{\partial}^{*}((X_{\eta}-\overline{X_{\eta}})^{k}(\chi N_{\delta,q}u))\|^{2}\;;
\end{multline}
we have used Lemma \ref{benign} for the first term, maximal estimates for the second (see \eqref{max}; note again that $q_{0}\leq q$, and Lemma \ref{percup}), and the basic $L^{2}$ estimate for the third. There is no $L_{n}$--term because the normal component of $\overline{\omega_{j}}\wedge\overline{\omega_{J}}$ is zero (as $j<n$, $n\notin J$).

\smallskip

In \eqref{est42}, the integrand equals $\big|\alpha_{\eta}\big(\sum_{j}(X_{\eta}-\overline{X_{\eta}})^{k}(\chi N_{\delta,q}u)_{jS}L_{j}\big)\big|^{2}$ (again omitting the contribution from the $g_{\eta}$--term). The vector field inside $\alpha_{\eta}(\cdot)$ equals $L^{S}_{(X_{\eta}-\overline{X_{\eta}})^{k}(\chi N_{\delta,q}u)}$. The form $(X_{\eta}-\overline{X_{\eta}})^{k}(\chi N_{\delta,q}u)$ is in the domain of $\overline{\partial}^{*}$, and Proposition \ref{integrated} applies and gives
\begin{multline}\label{nice}
 \sideset{}{'}\sum_{S}\int_{\Omega}\big|\sum_{j<n}\alpha_{\eta}(L_{j})(X_{\eta}-\overline{X_{\eta}})^{k}(\chi N_{\delta,q}u)_{jS}\big|^{2}\\
 \lesssim (1-\eta)\left(\|\overline{\partial}\big((X_{\eta}-\overline{X_{\eta}})^{k}(\chi N_{\delta,q}u)\big)\|^{2}+\|\overline{\partial}^{*}\big((X_{\eta}-\overline{X_{\eta}})^{k}(\chi N_{\delta,q}u)\big)\|^{2}\right)\\
 + C_{\eta}\|N_{\delta,q}u\|_{k-1}^{2}\;.
\end{multline}

\smallskip

Combining \eqref{est41}--\eqref{est44}, and the remark above about the commutator with $\nabla$ in \eqref{39a} and summing over the partition of unity gives
\begin{multline}\label{est45}
 \|[\overline{\partial},(X_{\eta}-\overline{X_{\eta}})^{k}]N_{\delta,q}u\|^{2}+\|[\overline{\partial}^{*},(X_{\eta}-\overline{X_{\eta}})^{k}]N_{\delta,q}u\|^{2}  
  + \delta\|[\nabla,(X_{\eta}-\overline{X_{\eta}})^{k}]N_{\delta,q}u\|^{2}  \\
  \lesssim (1-\eta)\left(\|\overline{\partial}((X_{\eta}-\overline{X_{\eta}})^{k}N_{\delta,q}u)\|^{2} + \|\overline{\partial}^{*}((X_{\eta}-\overline{X_{\eta}})^{k}N_{\delta,q}u)\|^{2}\right) \\
  + terms\; acceptable \;terms  \\
  \lesssim (1-\eta)\left( \|[\overline{\partial},(X_{\eta}-\overline{X_{\eta}})^{k}]N_{\delta,q}u\|^{2}+\|[\overline{\partial}^{*},(X_{\eta}-\overline{X_{\eta}})^{k}]N_{\delta,q}u\|^{2} \right) \\
  + (1-\eta)\left(\|(X_{\eta}-\overline{X_{\eta}})^{k}\overline{\partial}N_{\delta,q}u\|^{2} + \|(X_{\eta}-\overline{X_{\eta}})^{k}\overline{\partial}^{*}N_{\delta,q}u\|^{2}\right)  \\
  + acceptable \;terms\;.
\end{multline}
For $(1-\eta)$ small enough, the fourth line in \eqref{est45} can be absorbed into the first line, giving
\begin{multline}\label{est46}
 \|[\overline{\partial},(X_{\eta}-\overline{X_{\eta}})^{k}]N_{\delta,q}u\|^{2}+\|[\overline{\partial}^{*},(X_{\eta}-\overline{X_{\eta}})^{k}]N_{\delta,q}u\|^{2}  
  + \delta\|[\nabla,(X_{\eta}-\overline{X_{\eta}})^{k}]N_{\delta,q}u\|^{2}  \\
  \lesssim (1-\eta)\left(\|(X_{\eta}-\overline{X_{\eta}})^{k}\overline{\partial}N_{\delta,q}u\|^{2} + \|(X_{\eta}-\overline{X_{\eta}})^{k}\overline{\partial}^{*}N_{\delta,q}u\|^{2}\right)  
  + acceptable \;terms\;.
\end{multline}

The left hand side of \eqref{est46} is what is needed to estimate the sum of the left hand sides of \eqref{est36}, \eqref{est37}, and \eqref{est38} (see the discussion form \eqref{eq31} to \eqref{39a}). Thus
\begin{multline}\label{est47}
 \|\overline{\partial}N_{\delta,q}u\|_{k,2h_{\eta}}^{2}+\|\overline{\partial}^{*}N_{\delta,q}u\|_{k,2h_{\eta}}^{2}+\delta\|\nabla N_{\delta,q}u\|_{k,2h_{\eta}}^{2} \\
  \lesssim (1-\eta)\left(\|(X_{\eta}-\overline{X_{\eta}})^{k}\overline{\partial}N_{\delta,q}u\|^{2} + \|(X_{\eta}-\overline{X_{\eta}})^{k}\overline{\partial}^{*}N_{\delta,q}u\|^{2}\right)  
  + acceptable \;terms\;,
\end{multline}
where `acceptable terms' stands for terms that are of order at most $(k-1)$ in $\overline{\partial}N_{\delta,q}u$, $\overline{\partial}^{*}N_{\delta,q}u$, or $\delta^{1/2}\nabla N_{\delta,q}u$, or are the same as the terms on the left hand side of \eqref{est47}, but with  a small constant in front (for $(1-\eta)$ small enough and then $\delta\leq \delta_{0}(\eta)$), or are dominated by $\|u\|_{k}^{2}$. After combining the estimates $\|(X_{\eta}-\overline{X_{\eta}})^{k}\overline{\partial}N_{\delta,q}u\|^{2}\lesssim \|\overline{\partial}N_{\delta,q}u\|_{k,2h_{\eta}}^{2}+C_{\eta}\|N_{\delta,q}u\|_{k-1}^{2}$ and $\|(X_{\eta}-\overline{X_{\eta}})^{k}\overline{\partial}^{*}N_{\delta,q}u\|^{2}\lesssim \|\overline{\partial}^{*}N_{\delta,q}u\|_{k,2h_{\eta}}^{2}+C_{\eta}\|N_{\delta,q}u\|_{k-1}^{2}$ with the induction hypothesis, and absorbing terms, we obtain that there are a constant $C$, $\eta\in(0,1)$, and $\delta_{0}$ such that 
\begin{equation}\label{est48}
  \|\overline{\partial}N_{\delta,q}u\|_{k,2h_{\eta}}^{2}+\|\overline{\partial}^{*}N_{\delta,q}u\|_{k,2h_{\eta}}^{2}+\delta\|\nabla N_{\delta,q}u\|_{k,2h_{\eta}}^{2} \leq C\|u\|_{k}^{2}\;,\;0<\delta\leq \delta_{0}\;.
\end{equation}
This completes the induction on $k$; \eqref{est48} holds for all $k\in\mathbb{N}$, with $C$, $\eta$, and $\delta_{0}$ depending on $k$.

\smallskip

Fix $k\in\mathbb{N}$. In view of \eqref{est34}, and choosing $\eta$ so that \eqref{est48} holds, we see that there is $\delta_{0}(k)>0$ and a constant $C_{k}$ such that
\begin{equation}\label{est49}
 \|N_{\delta,q}u\|_{k}^{2} \leq C_{k}\|u\|_{k}^{2}\;,\,0<\delta\leq\delta_{0}(k)\;.
\end{equation}
(The weighted and unweighted norms are equivalent, with constants depending on $\eta$; once $\eta$ is fixed, this dependence is no longer relevant.) As we said at the beginning of this section, letting $\delta\rightarrow 0$ transfers this estimate to $N_{q}$. Since $k$ was arbitrary, we have now shown that if $N_{q+1}$ satisfies Sobolev estimates for $q\geq q_{0}$, then so does $N_{q}$. This concludes the downward induction on the degree $q$, and completes the proof of Theorem \ref{main}.
\end{proof}

\bigskip
\bigskip
\bigskip
\providecommand{\bysame}{\leavevmode\hbox to3em{\hrulefill}\thinspace}

\end{document}